\documentclass[review]{elsarticle}
\usepackage{amsthm}
\usepackage{lineno,hyperref}
\usepackage{amssymb,enumerate}
\usepackage{amsmath}
\usepackage{graphics}
\usepackage{xcolor}

\usepackage{geometry}
\geometry{a4paper,top=3cm,bottom=3cm,left=3.2cm,right=3.2cm,%
heightrounded,bindingoffset=5mm}

\newtheorem{thm}{Theorem}[section]
\newtheorem{theorem}{Theorem}[section]

  \newtheorem{lemma}[thm]{Lemma}
 
  \newtheorem{proposition}[thm]{Proposition}
 
 \newtheorem{definition}[thm]{Definition}
 
  \newtheorem{remark}[thm]{Remark}

\newcommand{\ep}{\varepsilon}

    \DeclareMathOperator\supp{supp}

\journal{Journal of \LaTeX\ Templates}

%%%%%%%%%%%%%%%%%%%%%%%
%% Elsevier bibliography styles
%%%%%%%%%%%%%%%%%%%%%%%
%% To change the style, put a % in front of the second line of the current style and
%% remove the % from the second line of the style you would like to use.
%%%%%%%%%%%%%%%%%%%%%%%

%% Numbered
%\bibliographystyle{model1-num-names}

%% Numbered without titles
%\bibliographystyle{model1a-num-names}

%% Harvard
%\bibliographystyle{model2-names.bst}\biboptions{authoryear}

%% Vancouver numbered
%\usepackage{numcompress}\bibliographystyle{model3-num-names}

%% Vancouver name/year
%\usepackage{numcompress}\bibliographystyle{model4-names}\biboptions{authoryear}

%% APA style
%\bibliographystyle{model5-names}\biboptions{authoryear}

%% AMA style
%\usepackage{numcompress}\bibliographystyle{model6-num-names}

%% `Elsevier LaTeX' style
\bibliographystyle{elsarticle-num}
%%%%%%%%%%%%%%%%%%%%%%%
\begin{document}

\begin{frontmatter}

\title{Small data blow-up of semi-linear wave equation with scattering dissipation and time-dependent mass}

\author{Masahiro Ikeda
\footnote{Department of Mathematics, Faculty of Science and Technology, Keio University, 3-14-1 Hiyoshi, Kohoku-ku, Yokohama, 223-8522, Japan/Center for Advanced Intelligence Project, RIKEN, Japan. e-mail: masahiro.ikeda@keio.jp/masahiro.ikeda@riken.jp}
\quad
Ziheng Tu
\footnote{Department of Mathematics, School of Data Science, Zhejiang University of Finance and Economics, 310018, Hangzhou, P.R.China. e-mail: tuziheng@zufe.edu.cn}
\quad
Kyouhei Wakasa
\footnote{Department of Creative Engineering, National Institute of Technology, Kushiro College, 2-32-1 Otanoshike-Nishi, Kushiro-Shi, Hokkaido 084-0916, Japan. e-mail: wakasa@kushiro-ct.ac.jp}
}

%The abstract of your paper
\begin{abstract}
In the present paper, we study small data blow-up of the semi-linear wave equation with a scattering dissipation term and a time-dependent mass term from the aspect of wave-like behavior. The Strauss type critical exponent is determined and blow-up results are obtained to both sub-critical and critical cases with corresponding upper bound lifespan estimates. For the sub-critical case, our argument does not rely on the sign condition of dissipation and mass, which gives the extension of the result in \cite{Lai-Sch-Taka18}. Moreover, we show the blow-up result for the critical case which is a new result.
\end{abstract}

\begin{keyword}
Semilinear wave equation; Strauss exponent; blowup; lifespan.

\MSC[2010] 35L71, secondary 35B44

\end{keyword}

\end{frontmatter}

\section{Introduction and main result}

In the present paper, we consider blow-up problem of the following hyperbolic model
\begin{eqnarray}\label{main}
u_{tt}-\Delta_g u+a(t)u_t+b(t)u=|u|^p\ &(x,t)\ \in\ \mathbb{R}^n\times[0,\infty),\nonumber\\
u(x,0)=\varepsilon f(x)\ &x\ \in\ \mathbb{R}^n,\\
u_t(x,0)=\varepsilon g(x)\ &x\ \in\ \mathbb{R}^n,\nonumber
\end{eqnarray}
where $a(t)$ and $b(t)$ stand for the coefficients of damping and potential terms such that $a(t)\in C^1([0,\infty))$, $b(t)\in C([0,\infty))$ and $a(t),\ tb(t)\in L^1([0,\infty))$.
Throughout this paper, we assume that $\varepsilon>0$ is a small parameter and the initial data $f\in\ H^1(\mathbb{R}^n)$ and $g\in\ L^2(\mathbb{R}^n)$.
We assume that the perturbations of Laplacian are uniformly elliptic operators
$$\Delta_g =\sum_{i,j=1}^n\partial_{x_i}(g_{ij}(x)\partial_{x_j})$$
whose coefficients satisfy, with some $\alpha>0,$ and $\gamma>0$
the following:
\begin{equation}
\label{uniform-g}
\sum_{i,j=1}^n g_{ij}(x)\xi_i\xi_j  \geq \gamma |\xi|^2,\quad         \xi\in\mathbb{R}^n,\\
\end{equation}
\begin{equation}
\label{g}
g_{ij}\in C^1(\mathbb{R}^n),\quad |\nabla g_{ij}(x)|+|g_{ij}(x)-\delta_{ij}|=O(e^{-\alpha|x|})  \mbox{ as } |x|\rightarrow\infty,
\end{equation}
where $\delta_{ij}$ is the Kronecker's delta.
We briefly review several previous results concerning (\ref{main}) with various types of setting on $a$ and $b\equiv0$ when $g_{ij}=\delta_{ij}$.
When $a(t)=1$, Todorova and Yordanov \cite{TY} showed the blow-up result
if $1<p<p_F(n)$, where $p_F(n)=1+2/n$ is the Fujita exponent known
to be the critical exponent for the semilinear heat equation.
The same work also obtained small data global existence for $p>p_F(n)$.
Zhang \cite{Z} established the blow-up in the critical case $p=p_F(n)$ even if the data are small.

For the non-constant damping case, there are extensive discussion on this topic when $a(t)$ is of the form $\mu/(1+t)^{\beta}$, with $\mu>0$ and $\beta\in\mathbb{R}$. As the classification given in Wirth \cite{Wir04,Wir06,Wir07}, for the ``effective case'', $-1<\beta<1$, Lin, Nishihara and Zhai \cite{LNZ12} obtained a small data blow-up result, if $1<p\le p_F(n)$, and small data global existence result, if $p>p_F(n)$; see also D'Abbicco, Lucente and Reissig~\cite{DLR13}.
When $\beta=-1$, Wakasugi \cite{W17} obtained a small data global existence for exponents
$p_F(n)<p<n/[n-2]_+$, where
\[
[n-2]_{+}:=
\left\{
\begin{array}{lll}
\infty &\mbox{for}& n=1,2,\\
n/(n-2) &\mbox{for}& n\ge3.
\end{array}
\right.
\]
On the other hand, Fujiwara, Ikeda and Wakasugi \cite{FIW19} have obtained the blow-up results together with the sharp estimates of the lifespan for $p<p_F(n)$.
When $p=p_F(n)$, Fujiwara, Ikeda and Wakasugi \cite{FIW19} proved sharp lower estimate of the lifespan and Ikeda and Inui \cite{II19} give the sharp upper estimate of the lifespan (see also \cite{ISW} for further discussion).

The case of $\beta=1$ is the threshold between ``effective case'' and ``scattering case''. It is known that the solution of corresponding linear problem shows different asymptotic behaviors for different size of $\mu$. While for the blow-up problem for semi-linear equation, we expect $\mu$ also plays the important role on determining the critical exponent.
Wakasugi~\cite{WY14_scale} showed the blow-up, if $1<p\leq p_F(n)$ and $\mu>1$ or $1<p\leq 1+2/(n+\mu-1)$ and $0<\mu\leq 1$.
Moreover, D'Abbicco~\cite{DABI} verified the global existence, if $p>p_F(n)$ and
$\mu$ satisfies one of the following: $\mu\geq5/3$ for $n=1$, $\mu\geq3$ for $n=2$ and
$\mu\ge n+2$ for $n\ge3$. An interesting observation is that the Liouville substitution $w(x,t):=(1+t)^{\mu/2}u(x,t)$
transforms the damped wave equation (\ref{main}) into the Klein-Gordon type equation
\[
w_{tt}-\Delta w+\frac{\mu(2-\mu)}{4(1+t)^2}w=\frac{|w|^p}{(1+t)^{\mu(p-1)/2}}.
\]
Thus, one expects that the critical exponent for $\mu=2$ is related to that of the semilinear wave equation.
D'Abbicco, Lucente and Reissig \cite{DLR14} have actually obtained
the corresponding blow-up result, if $1<p<p_c(n):=\max\left\{p_{F}(n),\ p_S(n+2)\right\}$. Here
$p_S(n)$ is the so-called Strauss exponent, which is critical exponent for the semilinear wave equation,
\begin{equation}
\label{strauss}
p_S(n):=\frac{n+1+\sqrt{n^2+10n-7}}{2(n-1)}
\end{equation}
which is the positive root of the quadratic equation
\begin{equation}
\label{gamma}
\gamma(p,n):=2+(n+1)p-(n-1)p^2=0.
\end{equation}
Their work also showed the existence of global classical solutions for small $\ep>0$, if $p>p_c(n)$ and either $n=2$ or $n=3$ and the data are radially symmetric.
Lai, Takamura and Wakasa \cite{LTW} have obtained the blow-up part of Strauss' conjecture, together with an upper bound of the lifespan $T(\ep)$, for (\ref{main}) in the case $n\geq 2$, $0<\mu<(n^2+n+2)/2(n+2)$ and $p_F(n)\le p<p_S(n+2\mu)$.
Later, Ikeda and Sobajima \cite{IS} were able to replace these conditions by less restrictive
$0<\mu<(n^2+n+2)/(n+2)$ and $p_F(n)\leq p\leq p_S(n+\mu)$ and proved small data blow-up result in the case $n=1$ and $0<\mu<\frac{4}{3}$. In addition, they have derived an
upper bound on the lifespan in the case $n\ge 1$ and $0<\mu<(n^2+n+2)/(n+2)$. Tu and Lin \cite{TL1}, \cite{TL2} have improved the estimates of $T(\ep)$ in \cite{IS} in the case $n\ge 2$ and $p_F(n)\le p< p_S(n+\mu)$. Unfortunately, still there is a gap for $\mu$, we have no results. In the case of $\beta=-1,\ p=p_F(n)$ and $\mu>1$, a small data blow-up result and an upper bound of lifespan are obtained by Ikeda, Sobajima and Wakasugi \cite{ISW}.

For the overdamping case $\beta< -1$, the long time behavior of solutions to (\ref{main}) is quite different. In the case of $\beta<-1$, Ikeda and Wakasugi \cite{IW} proved a small data global existence for energy solution for any $1<p\le 1+4/(n-2)$.

Lastly, for the scattering case $\beta>1$, we expect the critical exponent to be exactly the Strauss exponent.
In fact, Lai and Takamura \cite{LT} have shown that the solutions of (\ref{main}) blows up in finite time when $1<p<p_S(n)$ with the lifespan upper bound $T(\ep) \le C\ep^{-2p(p-1)/\gamma(p,n)}$ for $n\ge2$ and $1<p<p_S(n)$. In the critical case, $p=p_S(n)$, Wakasa and Yordanov \cite{WY-damped} showed the blow-up result together with the lifespan $T(\ep)\le \exp(C\ep^{-p(p-1)})$ for
$p=p_S(n)$ and $n\ge2$. These results of lifespan is sharp, since same type estimates were given for the semi-linear wave equation by Takamura \cite{Ta15}. On the other hand, Liu and Wang \cite{LW18} have obtained global existence results with super-critical exponent $p>p_S(n)$ for $n=3,4$ on asymptotically Euclidean manifolds.

We now turn back to our original model (\ref{main}). So far as we know, the Cauchy problem of this model has been studied in \cite{Lai-Sch-Taka18}, \cite{Lai-Sch-Taka} and \cite{DGR19} with different types of assumption on the time dependent damping and mass terms. In \cite{DGR19}, D'Abbicco, Girardi and Reissig studied the interplay between the effective damping and the dominated mass, i.e, $b(t)=o(a(t))$ as $t\rightarrow \infty$. The decay estimate of linear solution was obtained and has been successfully applied to the power type nonlinearity equation so that to show the global in time solution. Moreover, the scale of critical exponent $1+\frac{4}{n+4\beta}$ has been found which depends on $\beta:=\liminf_{t\rightarrow\infty}b^2(t)\int_0^t1/a(\tau)d\tau$. For the scattering damping case, Lai, Schiavone and Takamura has studied the blow up problems but with mass terms $b(t)=-\mu_2/(1+t)^{\alpha+1}$ of different decay speed in \cite{Lai-Sch-Taka18} (for $\alpha>1$) and \cite{Lai-Sch-Taka} (for $0<\alpha<1$), respectively. By applying multiplier method with comparison arguments, a wave-like blow-up result has been obtained for fast decaying mass in \cite{Lai-Sch-Taka18}. On the contrary, when the decay of mass term is not so fast, they find very different blow up behavior. The lifespan upper bound is much shorter. It seems in this case, the mass term plays dominant role.
In the present paper, we aim to extend the result of \cite{Lai-Sch-Taka18}. In their proof, the coefficient of mass term $b(t)$ is required to be negative. We aim to  remove this technique requirement. Moreover, we also obtain the blow-up result for the critical case $p=p_S(n)$, for the nonnegative mass term. We call our treatment as ``double multiplier method'' as we apply the multiplier method twice on the equation. The multiplier method comes from \cite{PT}, where the semi-linear wave equation with the scale invariant damping and mass term is studied.

We first give the definition of energy solution of problem \eqref{main}.
\begin{definition}
We say that $u$ is an energy solution of \eqref{main} on $[0,T)$ if
$$u\in C([0,T),H^1(\mathbb{R}^n))\cap C^1([0,T),L^2(\mathbb{R}^n))\cap L^p_{loc}(\mathbb{R}^n\times[0,T))$$
and
\begin{equation}
\begin{aligned}
&\int_{\mathbb{R}^n}u_t(x,t)\phi(x,t)dx-\int_{\mathbb{R}^n}u_t(x,0)\phi(x,0)dx\\
&+\int_0^tds\int_{\mathbb{R}^n}\{-u_t(x,s)\phi_t(x,s)+\sum_{i,j=1}^ng_{i,j}(x)\partial_{x_i}u(x,s)\partial_{x_j}\phi(x,s)\}dx\label{def}\\
&+\int_0^tds\int_{\mathbb{R}^n}a(s)u_t(x,s)\phi(x,s)dx+\int_0^tds\int_{\mathbb{R}^n}b(s)u(x,s)\phi(x,s)dx\\
&=\int_0^tds\int_{\mathbb{R}^n}|u(x,s)|^p\phi(x,s)dx
\end{aligned}
\end{equation}
with any $\phi\in C_0^\infty(\mathbb{R}^n\times[0,T))$ and any $t\in [0,T)$.
\end{definition}
\begin{definition}
We define the upperbound of lifespan to the problem \eqref{main} as
$$T(\ep):=\sup\{T\in[0,\infty)|\mbox{there exists an energy solution}\ u\ \mbox{in}\ [0,T) \}.$$
\end{definition}

Our main results are stated in the following.
\begin{theorem}[Sub-critical case]\label{thm:sub}
Let $n\geq 2$ and $1< p <p_S(n)$. Let $a(t)$, $tb(t)\in L^1([0,\infty))$ and $ a(t)\in C^1([0,\infty))$, $b(t)\in C([0,\infty))$.
Assume that the non-negative initial data $f$ and $g$ satisfy following conditions
\begin{eqnarray}
&g(x)+r_2(0)f(x)\geq 0\label{ini}\\
&g(x)+(a(0)-\rho'(0))f(x)\geq 0\label{ini2}
\end{eqnarray}
with compact support.
Here $r_2(0)$ and $\rho'(0)$ are the initial value of the solution of \eqref{r_2} and the derivative of solution of \eqref{rho-eqn}. Suppose that a solution $u$ of \eqref{main} satisfies:
$$supp\ u\subset\{(x,t)\in \mathbb{R}^n\times [0,T): |x|\leq t+R\}.$$
Then, there exists a constant $\varepsilon_0=\varepsilon_0(f,g,n,p,a,b,R)$ such that the lifespan $T(\varepsilon)$ has to satisfy
$$T(\varepsilon)\leq C\varepsilon ^{-2p(p-1)/\gamma(p,n)}$$
for $0<\varepsilon\leq\varepsilon_0$, where $C$ is a positive constant independent of $\varepsilon$.
\end{theorem}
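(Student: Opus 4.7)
The plan is to extend the ``double multiplier'' scheme of \cite{PT} from the scale-invariant case to the scattering setting, where the only structural information on the lower-order coefficients is $a(t),\,tb(t)\in L^{1}([0,\infty))$. The auxiliary functions $\rho$ and $r_{2}$ appearing in the hypotheses will be constructed from the ODEs \eqref{rho-eqn} and \eqref{r_2} so that they are strictly positive and bounded with finite positive limits as $t\to\infty$; the $L^{1}$ integrability of $a$ and $tb$ is precisely what makes this construction possible without any sign assumption on $b$, which is the point where we improve upon \cite{Lai-Sch-Taka18}. Heuristically, $\rho$ is a Liouville-type substitution absorbing the dissipation, while $r_{2}$ is a first-order multiplier calibrated so that, in combination with $\rho$, the cross terms arising from $b(t)u$ and $a(t)u_{t}$ telescope to an exact derivative plus a nonnegative remainder. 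The initial conditions \eqref{ini} and \eqref{ini2} are tailored so that the boundary traces produced in the process are nonnegative.

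Next, I would introduce two test-function functionals. The first is the spatial average $F_{0}(t):=\int_{\mathbb{R}^{n}}u(x,t)\,dx$, which, after integrating \eqref{main} against the constant function $1$ and exploiting the compact support of $u$ together with the divergence structure of $\Delta_{g}$, satisfies $F_{0}''+a(t)F_{0}'+b(t)F_{0}=\int_{\mathbb{R}^{n}}|u|^{p}\,dx$. Applying the multiplier $\rho(t)$ followed by the integrating factor associated with $r_{2}(t)$ rewrites the left-hand side as an exact derivative whose boundary traces are integrals of $g+r_{2}(0)f$ and $g+(a(0)-\rho'(0))f$; by \eqref{ini}, \eqref{ini2} these are nonnegative, and integrating in $t$ yields $F_{0}(t)\ge C\varepsilon$ for $t\ge 1$. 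The second functional is the Yordanov--Zhang weighted integral $F_{1}(t):=\int_{\mathbb{R}^{n}}u(x,t)\psi(x,t)\,dx$ with $\psi(x,t)=\lambda(t)\Phi(x)$, where $\Phi$ is the standard exponential eigenfunction of $-\Delta$, adjusted to the perturbed operator $\Delta_{g}$ using \eqref{uniform-g}--\eqref{g}, and $\lambda(t)$ solves the associated dual ODE involving $a$ and $b$. The same double-multiplier computation, together with the nonnegativity of the initial data, produces a second lower bound of the form $F_{1}(t)\ge C\varepsilon(1+t)^{-(n-1)/2}$.

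With both lower bounds in hand, the support hypothesis $|x|\le t+R$ and H\"older's inequality give the pointwise-in-$t$ estimate
\[
\int_{\mathbb{R}^{n}}|u(x,t)|^{p}\,dx\ \ge\ C(t+R)^{-n(p-1)}|F_{0}(t)|^{p}
\]
and an analogous bound for $F_{1}$ whose exponent combines the Yordanov--Zhang decay of $\|\psi\|_{L^{p'}}$ with the volume factor. Feeding these into the ODE for $F_{0}$ triggers the classical Sideris/Yordanov--Zhang iteration, whose exponent budget is precisely $\gamma(p,n)$ defined in \eqref{gamma}: each iteration step improves the polynomial lower bound on $F_{0}$ geometrically in $p$, and positivity of $\gamma(p,n)$ in the range $1<p<p_{S}(n)$ drives the iterated bound to infinity in finite time, producing $T(\varepsilon)\le C\varepsilon^{-2p(p-1)/\gamma(p,n)}$.

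The most delicate point I anticipate is the algebraic bookkeeping inside the double-multiplier computation in the absence of any sign on $b(t)$: the two multipliers must be calibrated through \eqref{rho-eqn} and \eqref{r_2} so that every term involving $b$, $a$, $\rho'$, or $r_{2}^{2}$ collapses into either a full time derivative or a nonnegative quadratic form, leaving only the initial traces controlled by \eqref{ini}--\eqref{ini2} and the nonnegative nonlinear source. The integrability of $tb$ is exactly what is needed to solve \eqref{rho-eqn}, \eqref{r_2} globally with the correct asymptotic behavior; a secondary technical issue, importing the Yordanov--Zhang exponential eigenfunction into the perturbed Laplacian $\Delta_{g}$, is handled via the exponential decay in \eqref{g}.
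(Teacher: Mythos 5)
Your overall architecture is the paper's: an unweighted average $F_0$ whose second-order ODE is factored by a pair of first-order multipliers into an iteration frame, a Yordanov--Zhang weighted functional giving an $\varepsilon^p$-level lower bound on $\int|u|^p\,dx$, and then the Lai--Takamura iteration with budget $\gamma(p,n)$. However, two of your key quantitative claims are wrong in a way that would make the proof fail as written. First, the asymptotics you assign to $\rho$ and $r_2$ (``strictly positive and bounded with finite positive limits as $t\to\infty$'') are the opposite of what the method needs and of what \eqref{rho-eqn} and \eqref{r_2} actually produce: the paper proves $r_1,r_2\in L^1([0,\infty))$, so that the exponential integrating factors are bounded above and below by time-independent constants (this is exactly what makes the frame \eqref{low:G(t)} usable), and proves $\rho(t)\sim e^{-\lambda t}$ --- indeed \eqref{rho-eqn} itself imposes $\rho(\infty)=0$. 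If $r_2$ tended to a positive limit, then $\int_0^t r_2(\tau)\,d\tau\to\infty$ and the integrating factor $e^{\int_t^{s}r_2}$ would decay exponentially in $t$, destroying the polynomial iteration frame; if $\rho$ tended to a positive constant, the factor $e^{p'\lambda_0 t}$ in $\|\psi\|_{L^{p'}(|x|\le t+R)}$ would no longer be cancelled in the H\"older step, and the resulting lower bound on $\int|u|^p\,dx$ would be exponentially small, hence useless.

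Second, your claimed bound $F_1(t)\ge C\varepsilon(1+t)^{-(n-1)/2}$ is strictly weaker than the correct (and needed) Yordanov--Zhang bound $F_1(t)\ge C\varepsilon$ for large $t$, which is what the paper derives for $G_1(t)=\int u(x,t)\rho(t)\varphi_{\lambda_0}(x)\,dx$ using \eqref{ini2}. The difference is fatal for the exponent bookkeeping: with $F_1\gtrsim\varepsilon$ one gets $\int|u|^p\,dx\gtrsim \varepsilon^p(1+t)^{(n-1)(1-p/2)}$ (the paper's \eqref{low:u^p}), hence the iteration seed $(1+t)^{-(n-1)p/2}t^{n+1}$, whose budget is exactly $\gamma(p,n)/(2(p-1))>0$ for $p<p_S(n)$; with your decaying bound one only gets $\int|u|^p\,dx\gtrsim\varepsilon^p(1+t)^{-(n-1)(p-1)}$, the seed degrades to $(1+t)^{-(n-1)(p-1)}t^{2}$, and the quantity replacing $\gamma(p,n)/(2(p-1))$, namely $2+\tfrac{2}{p-1}-(n-1)(p-1)-n$, is already negative for $n=3$, $p=2<p_S(3)$, so the iteration does not force blow-up on the full sub-Strauss range and the stated lifespan exponent is not recovered. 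A smaller structural point: the factorization of $\partial_t^2+a(t)\partial_t+b(t)$ uses the pair $(r_1,r_2)$ of \eqref{r_1r_2} with $r_1=a-r_2$, not ``$\rho$ followed by $r_2$''; $\rho$ solves a different ODE (it carries the eigenvalue $\lambda^2$ and $a'$), it belongs to the weighted functional, and that is where \eqref{ini2} enters --- not in the $F_0$ computation, whose only trace conditions are \eqref{ini} and $G(0)\ge 0$.
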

\begin{remark}
The blow-up result in the subcritical case agrees with the statement of Remark 1 given in \cite{LW18}. There is no sign requirement of $a(t)$ and $b(t)$ stated in our argument. It extends the blow-up results stated in \cite{Lai-Sch-Taka18} where only negative mass potential $b(t)<0$ is considered.
\end{remark}
\begin{remark}
For $n=2$ and $1<p\leq2$, it is known that the lifespan estimates can be improved as
$$T(\varepsilon)\leq \left\{
\begin{array}{lll}  C\varepsilon^{-(p-1)/(3-p)}\ \ \ &\mbox{for}\ \ 1<p<2\\
Ca(\varepsilon)\ \ \ &\mbox{for}\ \ p=2\end{array}
\right.
$$
for the wave equation and also for the wave equation with the positive scattering damping and the negative fast decay mass term when $g$ does not vanish identically. Here $a=a(\varepsilon)$ is a number satisfying
$$a^2\varepsilon^2\log(1+a)=1.$$ For example, see the introductions in \cite{Ta15} and Theorem $2$ and Theorem $4$ in \cite{Lai-Sch-Taka18}.
However, we do not know whether this lifespan estimate still holds in our settings without the sign condition.
\end{remark}

\begin{theorem}[Critical case]\label{thm:cri}
Let $n\geq 2$ and $p = p_S(n)$. Let $a(t), b(t)> 0$ and $a(t),\ tb(t) \in L^1([0,\infty))$ and $a(t)\in C^1([0,\infty))$, $b(t)\in C([0,\infty))$. Assume that $f,\ g$ satisfy \eqref{ini} with compact support. Suppose that a solution $u$ of \eqref{main} satisfies:
$$supp\ u\subset\{(x,t)\in \mathbb{R}^n\times [0,T): |x|\leq t+R\}.$$
Then, there exists a constant $\varepsilon_0=\varepsilon_0(f,g,n,p,a,b,R)$ such that the lifespan $T(\varepsilon)$ has to satisfy
$$T(\varepsilon)\leq \exp(C\varepsilon^{-p(p-1)})$$
for $0<\varepsilon\leq\varepsilon_0$, where $C$ is a positive constant independent of $\varepsilon$.
\end{theorem}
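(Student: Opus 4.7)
The plan is to combine the ``double multiplier'' device of Theorem~\ref{thm:sub} with the Wakasa--Yordanov logarithmic slicing iteration from \cite{WY-damped} for the critical semilinear wave equation with scattering damping. First I would carry over the auxiliary functions $\rho(t)$ and $r_2(t)$ introduced for the sub-critical proof: $\rho$ solves the linear ODE associated to $\partial_t^2+a(t)\partial_t+b(t)$ and $r_2$ solves the corresponding Riccati equation. Under $a,b>0$ and $a,\,tb\in L^1([0,\infty))$, both $\rho$ and $r_2$ are strictly positive and uniformly bounded above and below on $[0,\infty)$. Rewriting \eqref{main} through these two multipliers puts the damping and mass contributions into divergence form, reducing the problem -- up to uniformly bounded positive factors -- to a free semilinear wave equation with nonlinearity comparable to $|u|^p$.

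Next I would introduce the pair of functionals
$$F_1(t):=\int_{\mathbb{R}^n}u(x,t)\,dx,\qquad F_2(t):=\int_{\mathbb{R}^n}u(x,t)\,\psi_1(x,t)\,dx,$$
where $\psi_1(x,t):=e^{-t}\phi(x)$ and $\phi(x):=\int_{S^{n-1}}e^{x\cdot\omega}\,d\omega$ is the Yordanov--Zhang test function, corrected by a bounded positive factor built from $\rho,r_2$ so that $\psi_1$ is an approximate solution of the adjoint linear equation $\psi_{tt}-\Delta_g\psi-(a\psi)_t+b\psi=0$. Testing \eqref{main} against $1$ and against $\psi_1$, together with the positivity condition \eqref{ini}, $a,b>0$, and compact support of the data, yields the two base lower bounds $F_1(t)\ge C\varepsilon$ and $F_2(t)\ge C\varepsilon$ on $[0,T(\varepsilon))$, exactly as in the sub-critical argument.

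Applying H\"older's inequality on the backward light cone $|x|\le t+R$ with weight $\psi_1$, these bounds upgrade to a Kato-type integral inequality that, at the critical exponent $p=p_S(n)$, carries a logarithmic kernel. I would then run the slicing induction of \cite{WY-damped}: prove by induction on $j\ge 0$ that
$$F_1(t)\ge D_j\bigl(\log(t/R)\bigr)^{a_j}\qquad\text{for }t\ge 2R,$$
with $a_{j+1}=p\,a_j+1$ and a recursive lower bound $D_{j+1}\ge KD_j^p/a_{j+1}^p$. Solving the recursions gives $a_j\sim cp^j$ and $\log D_j\gtrsim p^j\log\bigl(C\varepsilon^{p(p-1)}\log(t/R)\bigr)+O(p^j)$, so that $F_1(t)\to\infty$ once $\varepsilon^{p(p-1)}\log(t/R)$ exceeds a positive constant. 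This forces $T(\varepsilon)\le\exp(C\varepsilon^{-p(p-1)})$.

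The principal obstacle is the compatibility of the Yordanov--Zhang test function with the non-scale-invariant damping $a(t)$, mass $b(t)$, and variable coefficients $g_{ij}$: the classical $\psi_1$ solves $\psi_{tt}-\Delta\psi=0$ exactly, while here the remainders generated by $a\psi_t$, $b\psi$, and $(\Delta_g-\Delta)\psi$ must be absorbed into a bounded multiplicative correction. This is precisely where the double multiplier enters, and it relies on $a,\,tb\in L^1$ together with the exponential decay \eqref{g} of $g_{ij}-\delta_{ij}$. A second, more delicate point is the accounting of the constants $D_j$ in the critical iteration: at $p=p_S(n)$ the loss $a_{j+1}^{-p}$ per step must be balanced against the growth $a_j\sim cp^j$, and it is exactly this balance that yields the sharp lifespan $\exp(C\varepsilon^{-p(p-1)})$ rather than a weaker bound.
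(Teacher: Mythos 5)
There is a genuine gap at the heart of your proposal: the step where ``H\"older's inequality on the backward light cone with weight $\psi_1$'' is claimed to produce ``a Kato-type integral inequality that, at the critical exponent $p=p_S(n)$, carries a logarithmic kernel.'' A single exponential test function $\psi_1=e^{-t}\phi(x)$ (Yordanov--Zhang) cannot do this: at $p=p_S(n)$ the powers it produces balance exactly, with no logarithmic gain, which is precisely why the single-test-function Kato iteration of \cite{LT} (and of the paper's own Section \ref{pf:sub}) stops at $p<p_S(n)$. The paper's mechanism for the logarithmic kernel is different and essential: it uses the whole family of eigenfunctions $\varphi_\lambda$ of the perturbed Laplacian, $\Delta_g\varphi_\lambda=\lambda^2\varphi_\lambda$ for $\lambda\in(0,\lambda_0]$ (Lemma \ref{lem:varphi}), integrated in the spectral parameter against $\lambda^q\,d\lambda$ to form the auxiliary functions $\xi_q$ and $\eta_q$ of \eqref{aq}--\eqref{bq}, with the critical choice $q=(n-1)/2-1/p$. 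The two-sided bounds of Lemma \ref{lem:aux} for these $\lambda$-integrated kernels are what generate the factor $(\log\langle s\rangle)^{p-1}$ in the iteration frame \eqref{frame} of Proposition \ref{prop:frame}, and the slicing iteration is run on $\widetilde{F}(t)=\int u\,\eta_q(x,t,t)\,dx$, not on $\int u\,dx$. Moreover, the double multiplier is not used to ``reduce to a free wave equation up to bounded factors''; it is applied to the $\lambda$-dependent ODE $F''+aF'+(b-\lambda^2)F=\int|u|^p\varphi_\lambda\,dx$, and the comparison (via Lemma 2.3 of \cite{WY-damped}) must track the $\cosh\lambda(t-s)$, $\sinh\lambda(t-s)/\lambda$ behavior uniformly in $\lambda$ so that the subsequent $\lambda$-integration yields Proposition \ref{prop:identity}. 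Without some replacement for this spectral-family construction, your iteration scheme $F_1(t)\ge D_j(\log(t/R))^{a_j}$ has no valid base inequality to run on, and the argument does not close.

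Two further claims in your setup are factually wrong and would need repair even if the frame existed. First, under $b(t)>0$ the Riccati solution $r_2$ of \eqref{r_2} is \emph{negative} for large $t$ (Lemma \ref{lem:L^1}), not positive and bounded below; indeed this negativity is exactly what the paper needs, since it gives $r_1-r_2=a-2r_2>0$ for large $t$, which is the hypothesis required to invoke Lemma 2.3 of \cite{WY-damped} for the fundamental system $\chi_1,\chi_2$ of $L_{r_1,r_2}$. This is also the only place where the sign condition $b>0$ enters, and why it cannot yet be removed in the critical case (Remark \ref{rem-App}); your proposal never isolates where positivity of $b$ is used. Second, $\rho(t)\sim e^{-\lambda t}$ (Lemma \ref{lem:rho}), so it is not ``uniformly bounded above and below''; the bounded-multiplicative-correction picture on which your reduction rests is not available.
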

\begin{remark}
 The critical part results can be viewed as an extension of scattering damping wave equation without mass term studied in \cite{WY-damped}. However, here we still can not relax the non-negative condition of $b(t)$ due to some technique reason. See Remark \ref{rem-App} for more details.
\end{remark}
\begin{remark}
The proofs of local existence and finite propagation speeds
are given in Appendix which is mainly based on the argument
in \cite{Lai-Sch-Taka} and \cite{Ev1}.
\end{remark}

The rest of the paper is arranged as follows. In Section \ref{prelim}, we do some preliminary work for both cases. The double multiplier method is applied to obtain the lower bound of functional which gives the frame of the iteration argument. In Section \ref{pf:sub}, the suitable test function for the sub-critical case is derived with some basic properties from the conjugated equation. The iteration argument of Theorem \ref{thm:sub} follows similar as the scattering damping wave equation without mass term in \cite{LT}. In Section \ref{pf:cri}, we first collect some basic proof elements for critical case. Some auxiliary functions are introduced for the lower bound estimation and finally Theorem \ref{thm:cri} can be proved with some similar argument as \cite{WY}.

\section{Preliminaries}\label{prelim}

Let $u$ be an energy solution of \eqref{main} on $[0,T)$, consider the lower bound of following functional
$$G(t):=\int_{\mathbb{R}^n}u(x,t)dx.$$
 Choosing the test function $\phi=\phi(x,s)$ in \eqref{def} to satisfy $\phi\equiv1$ in $\{(x,s)\in \mathbb{R}^n \times [0,t]:|x|\leq s+R\}$, we obtain
\begin{eqnarray*}
&&\int_{\mathbb{R}^n}u_t(x,t)dx-\int_{\mathbb{R}^n}u_t(x,0)dx+\int_0^tds\int_{\mathbb{R}^n}[a(s)u_t(x,s)+b(s)u(x,s)]dx\\
&&=\int_0^tds\int_{\mathbb{R}^n}|u(x,s)|^pdx,
\end{eqnarray*}
i.e,
$$G'(t)-G'(0)+\int_0^t a(s)G'(s)+b(s)G(s)ds=\int_0^tds\int_{\mathbb{R}^n}|u(x,s)|^pdx.$$
Since all the quantities in this equation except $G'(t)$ is differentiable in $t$, so that so is $G'(t)$. Hence, we have
\begin{equation}\label{iden:G(t)}
G''(t)+a(t)G'(t)+b(t)G(t)=\int_{\mathbb{R}^n}|u(x,t)|^pdx.
\end{equation}
We now introduce the double multiplier method. Assume that the left-hand-side of the above identity can be written in the form
\begin{eqnarray*}&&G''(s)+a(s)G'(s)+b(s)G(s)\\
&&=[G'(s)+r_2(s)G(s)]'+r_1(s)[G'(s)+r_2(s)G(s)],
\end{eqnarray*}
where $r_1(t)$ and $r_2(t)$ are the two unknown functions satisfying
\begin{equation}
\begin{cases}
\begin{aligned}\label{r_1r_2}
&r_1(t)+r_2(t)=a(t),\\
&r_2'(t)+r_1(t)r_2(t)=b(t).
\end{aligned}
\end{cases}
\end{equation}
Multiplying $\exp(\int_K^sr_1(\tau)d\tau)$ on the both sides of equation \eqref{iden:G(t)} and integrating over $[0,t]$, we have:
$$\exp(\int_K^sr_1(\tau)d\tau)[G'(s)+r_2(s)G(s)]\Bigg|_0^t=\int_0^t\exp(\int_K^sr_1(\tau)d\tau)\int_{\mathbb{R}^n}|u|^pdxds.$$
Here $K$ is an any real number. As initial data assumption \eqref{ini} implies
\begin{equation}\label{G>0}
G'(0)+r_2(0)G(0)\geq0,
\end{equation}
we have
$$G'(t)+r_2(t)G(t)\geq\int_0^t\exp(\int_t^sr_1(\tau)d\tau)\int_{\mathbb{R}^n}|u|^pdxds.$$
Again multiplying $\exp(\int_K^sr_2(\tau)d\tau)$ and using $G(0)\geq0$, we conclude that for $0\leq s_1 \leq  s_2 \leq t$:
$$G(t)\geq\int_0^t e^{\int_t^{s_2}r_2(\tau)d\tau}\int_0^{s_2}e^{\int_{s_2}^{s_1}r_1(\tau)d\tau}\int_{\mathbb{R}^n}|u|^pdxds_1ds_2.$$
We now state following lemma about $r_1(t)$ and $r_2(t)$.
\begin{lemma}\label{lem:L^1}
If $a(t)$ and $tb(t)$ are in $L^1([0,\infty))$, then there exits a pair of solution $(r_1(t),\ r_2(t))$ in $L^1([0,\infty))\times L^1([0,\infty))$ to the ODE system \eqref{r_1r_2}.
Moreover, if $b(t)$ is positive, then $r_2(t)$ is negative for large $t$.
\end{lemma}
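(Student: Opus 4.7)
My plan is to convert the nonlinear Riccati system \eqref{r_1r_2} to a linear second-order ODE via the substitution
\begin{equation*}
r_2(t) = -\frac{\rho'(t)}{\rho(t)}, \qquad r_1(t) = a(t) + \frac{\rho'(t)}{\rho(t)}.
\end{equation*}
A direct computation shows that if $\rho$ solves the linear equation
\begin{equation*}
\rho''(t) + a(t)\,\rho'(t) + b(t)\,\rho(t) = 0
\end{equation*}
and does not vanish, then the pair above satisfies both $r_1 + r_2 = a$ and $r_2' + r_1 r_2 = b$. The problem thus reduces to exhibiting a solution $\rho$ of this linear ODE which is strictly positive on $[0,\infty)$ and whose logarithmic derivative is integrable.

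I would construct such a $\rho$ via its asymptotic behavior at $+\infty$. Setting $A(t)=\int_{0}^{t}a(s)\,ds$, which is bounded since $a \in L^1$, the equation rewrites as $(e^{A}\rho')' = -e^{A}b\rho$. Looking for $\rho$ with $\rho(t)\to 1$ and $\rho'(t)\to 0$ as $t\to\infty$, integration from $t$ to $\infty$ yields the integral system
\begin{equation*}
\rho'(t) = e^{-A(t)}\int_{t}^{\infty}e^{A(s)}b(s)\rho(s)\,ds, \qquad \rho(t) = 1 - \int_{t}^{\infty}\rho'(s)\,ds.
\end{equation*}
For $T_0$ sufficiently large that $\int_{T_0}^{\infty} s\,|b(s)|\,ds$ is small, a standard contraction argument on $L^{\infty}([T_0,\infty))$ produces such a $\rho$, with (say) $\rho \geq 1/2$ on $[T_0,\infty)$. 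Extending back to $[0, T_0]$ by solving the linear ODE with continuous coefficients gives a global $C^2$ solution.

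The $L^1$ bounds would then follow from the pointwise estimate $|\rho'(t)| \leq C\int_{t}^{\infty}|b(s)|\,ds$ (a consequence of the integral formula above and the boundedness of $\rho$ and $e^{\pm A}$), whence Fubini gives
\begin{equation*}
\int_{0}^{\infty}|\rho'(t)|\,dt \;\leq\; C\int_{0}^{\infty}\int_{t}^{\infty}|b(s)|\,ds\,dt \;=\; C\int_{0}^{\infty}s\,|b(s)|\,ds \;<\; \infty.
\end{equation*}
Combined with $\rho$ being bounded away from zero on $[0,\infty)$, this places $r_2 = -\rho'/\rho$ in $L^1([0,\infty))$; since $a\in L^1$, so does $r_1 = a - r_2$. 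For the ``moreover'' statement, if $b > 0$ then the integrand in the formula for $\rho'(t)$ is strictly positive for $t\geq T_0$, giving $\rho'(t) > 0$ and hence $r_2(t) = -\rho'(t)/\rho(t) < 0$ for large $t$.

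The main obstacle I anticipate is ensuring strict positivity of $\rho$ on \emph{all} of $[0,\infty)$ rather than only on a tail $[T_0,\infty)$: the contraction only directly controls the tail, and propagating the solution backwards through $[0,T_0]$ could in principle allow $\rho$ to develop a zero. Because $\rho$ and $\rho'$ depend continuously on the Cauchy data at $T_0$ and the coefficients $a,b$ are continuous on $[0,T_0]$, a careful choice of $T_0$, together with the freedom to rescale $\rho$ by a positive constant, suffices to keep $\rho$ away from zero on $[0,T_0]$. This is the step where the integrability hypotheses on $a$ and on $tb$ must be fully leveraged.
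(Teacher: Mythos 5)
Your reduction is the same as the paper's: write $r_2=-\rho'/\rho$, $r_1=a-r_2$, so that \eqref{r_1r_2} collapses to the linear equation $\rho''+a\rho'+b\rho=0$, and then produce a solution behaving like $1$ at infinity. Where the paper obtains this tail solution from oscillation theory --- the Hille--Wintner comparison theorem to rule out oscillation, followed by Hartman's asymptotic integration result (Corollary 9.1 of Ch.~XI) giving $y_1\sim 1$, $y_1'=o(1/x)$ --- you build it directly by a contraction for the integral system on $[T_0,\infty)$. That part of your argument is correct and is in fact more self-contained and quantitative: your Fubini bound $\int_{T_0}^\infty|\rho'(t)|\,dt\le C\int_{T_0}^\infty s|b(s)|\,ds$ yields genuine absolute integrability of $r_2$, whereas the paper only verifies that $\int_{t_0}^\infty r_2\,d\tau=\lim_{t\to\infty}\log k(t)$ converges. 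Your sign argument for $r_2<0$ when $b>0$ is also fine and matches the paper's conclusion.

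The gap is exactly where you anticipated it, and your proposed repair does not work. Rescaling $\rho$ by a positive constant does not move its zeros, and there is no further freedom to exploit: since the complementary solution grows like $t$, the solution with $\rho(t)\to1$ is unique, so any zero it has in $[0,T_0]$ is unavoidable, and continuous dependence on the Cauchy data at $T_0$ cannot remove it. Indeed the backward extension can genuinely fail: take $a\equiv0$ and a smooth $b\ge0$ with $b\equiv\omega^2$ on $[0,1]$, $b\equiv0$ on $[2,\infty)$, and $\omega>\pi$. Then $a,\ tb(t)\in L^1([0,\infty))$, but every solution of $\rho''+\omega^2\rho=0$ on $[0,1]$ vanishes there (consecutive zeros are $\pi/\omega<1$ apart); equivalently, every solution of the Riccati equation \eqref{r_2}, which here reads $r_2'=r_2^2+\omega^2$, blows up before $t=1$ no matter how $r_2(0)$ is chosen. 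So for such coefficients no pair $(r_1,r_2)$ solves \eqref{r_1r_2} on all of $[0,\infty)$, integrable or not, and the lemma cannot be proved as stated without an extra hypothesis (for instance smallness of $\|a\|_{L^1}$ and $\int_0^\infty t|b(t)|\,dt$, or disconjugacy of $\rho''+a\rho'+b\rho=0$ on the whole half-line). You should know that the paper's own proof has precisely the same defect --- it constructs $r_2$ only on $[t_0,\infty)$ for large $t_0$ and never returns to $[0,t_0]$ --- so your proposal is no weaker than the published argument; but the step you flagged is not a technicality, it is the point at which both arguments, and in this generality the statement itself, break down.
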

\begin{proof}
We postpone its proof in Appendix.
\end{proof}
In fact, plugging $r_1(t)=a(t)-r_2(t)$ into the second equation of \eqref{r_1r_2} gives the following Ricatti's equation for $r_2$
\begin{equation}\label{r_2}
r_2'(t)+a(t)r_2(t)-r_2^2(t)=b(t).
\end{equation} Since $a(t)\in L^1$, we only need to show $r_2(t)\in L^1$.

With the aid of this lemma, the lower bound of $G(t)$ can be further simplified:
\begin{equation}\label{low:G(t)}
G(t)\geq C_{r_1,r_2}\int_0^t\int_0^{s_2}\int_{\mathbb{R}^n}|u(s_1,x)|^pdxds_1ds_2.
\end{equation}
where $C_{r_1,r_2}=e^{-\|r_1\|_{L^1}-\|r_2\|_{L^1}}$. We may also define the quantities:
$$A_1(t,s)=\int_s^tr_1(\tau)d\tau,\ A_2(t,s)=\int_s^tr_2(\tau)d\tau\ \ \mbox{and}\ \ A(t,s)=A_1(t,s)+A_2(t,s).$$

Before going further, we also state following lemma related to our perturbed Laplacian.
\begin{lemma}
\label{lem:varphi}
Consider the following elliptic problem:
\begin{equation}
\label{eigen-pr}
\Delta_g \varphi_\lambda = \lambda^2\varphi_{\lambda},\quad x\in \mathbb{R}^n,
\end{equation}
where $\lambda\in (0,\alpha/2].$
Let $n\geq 2$. There exists a solution $\varphi_\lambda\in C^\infty(\mathbb{R}^n)$ to
(\ref{eigen-pr}), such that
\begin{equation}
\label{lem1:ineq}
|\varphi_\lambda(x)-\varphi(\lambda x)|\leq C_\alpha\lambda^{\theta},\quad x\in \mathbb{R}^n,
\quad \lambda\in (0,\alpha/2],
\end{equation}
where $\theta\in (0,1]$ and
$\varphi(x)=\int_{\mathbb{S}^{n-1}}e^{x\cdot\omega} dS_\omega\sim c_n|x|^{-(n-1)/2}e^{|x|},$
$c_n>0,$ as $|x|\rightarrow\infty.$
Moreover, $\varphi_\lambda(\: \cdot\:)-\varphi(\lambda \: \cdot)$ is a continuous $L^\infty (\mathbb{R}^n)$ valued
function of $\lambda\in (0,\alpha/2]$ and there exist positive constants $D_0,$ $D_1$ and $\lambda_0$, such that
\begin{equation}
\label{2sided}
D_0 \langle \lambda|x|\rangle^{-(n-1)/2}e^{\lambda|x|}\leq \varphi_\lambda(x)\leq D_1
\langle \lambda|x|\rangle^{-(n-1)/2}e^{\lambda|x|}, \quad x\in\mathbb{R}^n,
\end{equation}
holds whenever $0<\lambda\leq \lambda_0$. Here $ \langle \cdot \rangle=\sqrt{1+|\cdot|^2}$.
\end{lemma}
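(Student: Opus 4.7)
The plan is to construct $\varphi_\lambda$ as a perturbation of the standard Yordanov–Zhang test function
$$\varphi(y)=\int_{\mathbb{S}^{n-1}} e^{y\cdot\omega}\,dS_\omega,$$
which satisfies $\Delta \varphi(y)=\varphi(y)$, so that $\varphi(\lambda\,\cdot)$ already solves the flat equation $\Delta\varphi(\lambda\,\cdot)=\lambda^2\varphi(\lambda\,\cdot)$, with the stated Bessel asymptotics $\varphi(y)\sim c_n|y|^{-(n-1)/2}e^{|y|}$. One then writes $\varphi_\lambda=\varphi(\lambda\,\cdot)+w_\lambda$ and asks for $w_\lambda$ to solve
$$(\Delta_g-\lambda^2)w_\lambda=-(\Delta_g-\Delta)\varphi(\lambda\,\cdot)=:F_\lambda .$$

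The first step is to estimate the source. Using (\ref{g}) together with the pointwise bound $|\partial^\beta\varphi(\lambda x)|\lesssim\lambda^{|\beta|}\langle\lambda|x|\rangle^{-(n-1)/2}e^{\lambda|x|}$ for $|\beta|\le2$, and the restriction $\lambda\le\alpha/2$, one obtains
$$|F_\lambda(x)|\le C\lambda^{2}\, e^{-\alpha|x|/2},\qquad x\in\mathbb{R}^n,$$
so $F_\lambda$ is exponentially decaying with $L^{2}$-norm $O(\lambda^{2})$ in any exponentially weighted space $e^{-\beta|\cdot|}L^{2}$ with $\lambda<\beta<\alpha$.

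The second step is to invert $\Delta_g-\lambda^{2}$ on such a weighted space. Since $\Delta_g$ is uniformly elliptic with coefficients that are exponentially close to $\delta_{ij}$ at infinity, a low-energy limiting-absorption / Agmon-type weighted estimate (of the same flavour used on asymptotically Euclidean manifolds in \cite{LW18} and in \cite{WY-damped}) produces a bounded right inverse for small $\lambda>0$, depending continuously on $\lambda$. Combined with the $O(\lambda^{2})$ bound on $F_\lambda$ and a local Sobolev embedding for the elliptic equation solved by $w_\lambda$, this gives the uniform pointwise bound $\|w_\lambda\|_{L^\infty}\le C_\alpha\lambda^{\theta}$ asserted in (\ref{lem1:ineq}); the exponent $\theta\in(0,1]$ (rather than $\theta=2$) reflects the loss in the weighted resolvent as $\lambda\downarrow0$, which is sharpest in dimension two. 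Elliptic regularity then upgrades $w_\lambda$, and hence $\varphi_\lambda$, to $C^\infty$, and the continuity of the weighted resolvent in $\lambda$ gives the claimed continuous dependence of $\varphi_\lambda(\cdot)-\varphi(\lambda\,\cdot)$ on $\lambda$.

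For the two-sided estimate (\ref{2sided}), the explicit asymptotics of $\varphi$ give uniform constants $D_0',D_1'>0$ with
$$D_0'\,\langle\lambda|x|\rangle^{-(n-1)/2}e^{\lambda|x|}\le\varphi(\lambda x)\le D_1'\,\langle\lambda|x|\rangle^{-(n-1)/2}e^{\lambda|x|}.$$
The $O(\lambda^{\theta})$ correction is absorbed by choosing $\lambda_0$ small enough that $C_\alpha\lambda_0^{\theta}<D_0'/2$: on the set where $\lambda|x|$ is bounded both quantities are $O(1)$, and on the set where $\lambda|x|\to\infty$ the weighted exponential dominates any $O(\lambda^{\theta})$ term. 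Shrinking $D_0'$ and enlarging $D_1'$ slightly yields the constants $D_0,D_1$ in (\ref{2sided}), and the lower bound in particular gives $\varphi_\lambda>0$ on $\mathbb{R}^n$ whenever $0<\lambda\le\lambda_0$. The main obstacle is the uniform construction of the low-energy resolvent of $\Delta_g-\lambda^2$ on an exponentially weighted space together with the extraction of the $\lambda^{\theta}$ pointwise loss; once this analytic input is in hand, the remaining perturbative and positivity arguments are routine.
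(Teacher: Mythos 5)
First, a point of reference: the paper contains no proof of this lemma at all; its ``proof'' is the single line ``See Lemma 2.2 in \cite{WY}'', so your attempt can only be measured against that reference, and your ansatz $\varphi_\lambda=\varphi(\lambda\,\cdot)+w_\lambda$ with $(\Delta_g-\lambda^2)w_\lambda=-(\Delta_g-\Delta)\varphi(\lambda\,\cdot)$ is indeed the natural perturbative route in the spirit of that construction. Your treatment of \eqref{2sided} is sound: since $\langle r\rangle^{-(n-1)/2}e^{r}$ is bounded below by a positive constant for $r\geq0$, an $O(\lambda^{\theta})$ error is absorbed once $\lambda_0$ is small. One slip in the source estimate: from the ingredients you list, the $\nabla g_{ij}$ part of $F_\lambda$, namely $\sum_{i,j}(\partial_{x_i}g_{ij})\,\lambda(\partial_j\varphi)(\lambda x)$, is only $O(\lambda e^{-\alpha|x|/2})$, not $O(\lambda^{2}e^{-\alpha|x|/2})$, because assumption \eqref{g} supplies no extra power of $\lambda$ for $\nabla g_{ij}$; the second power can be recovered using $\nabla\varphi(0)=0$ at the cost of a smaller exponential rate, but in any case $O(\lambda)$ suffices for $\theta\in(0,1]$, so this is harmless.

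The genuine gap is the inversion step, which is the analytic heart of the lemma and is black-boxed in your write-up, with the wrong tool named. A ``bounded right inverse of $\Delta_g-\lambda^{2}$ on $e^{-\beta|\cdot|}L^{2}$'' is not even the correct statement: the solution $w_\lambda$ does not decay at rate $\beta$ (at best it decays like the kernel of $(-\Delta+\lambda^{2})^{-1}$, i.e.\ at rate $\lambda<\beta$), so no inverse can preserve that space. Moreover, limiting-absorption/Agmon estimates concern energies on the continuous spectrum, whereas $z=-\lambda^{2}$ lies off the spectrum; the delicate issue is uniformity as $\lambda\downarrow0$, which requires that $0$ be neither an eigenvalue nor a resonance of $-\Delta_g$ (true in $n\geq3$ for divergence-form uniformly elliptic operators by the maximum principle, while in $n=2$ constants are a zero resonance --- exactly the source of the loss you attribute to dimension two). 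Finally, even granting a uniform weighted-$L^{2}$ resolvent bound, your passage to $\|w_\lambda\|_{L^\infty}$ by ``local Sobolev embedding'' fails: weighted-$L^{2}$ control degrades as $|x|\to\infty$ and cannot produce a bound uniform in $x$, which is what \eqref{lem1:ineq} demands. The repair is pointwise rather than spectral: $(-\Delta_g+\lambda^{2})^{-1}=\int_0^\infty e^{-\lambda^{2}t}e^{t\Delta_g}\,dt$ is positivity preserving, and Aronson's Gaussian upper bound for the heat kernel of a uniformly elliptic divergence-form operator gives a Green's function bound $0\leq G_\lambda(x,y)\leq C|x-y|^{-(n-2)}$ for $n\geq3$ (logarithmic for $n=2$), uniformly in $\lambda>0$; convolving this with the exponentially decaying source gives $\|w_\lambda\|_{L^\infty}\leq C\lambda$ for $n\geq3$ and $C\lambda\log(1/\lambda)$ for $n=2$, which is precisely the asserted $\lambda^{\theta}$ with $\theta\in(0,1]$. (A Lippmann--Schwinger iteration with the explicit MacDonald-function kernel of $(-\Delta+\lambda^{2})^{-1}$ accomplishes the same.) With that substitution, together with elliptic regularity for smoothness and continuity in $\lambda$, your outline closes.
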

\begin{proof}
See Lemma 2.2 in \cite{WY}.
\end{proof}

\section{Sub-critical case: Proof of Theorem \ref{thm:sub}}\label{pf:sub}

In this section, we focus on the sub-critical case. Following lemma gives the asymptotic for the test function.
\begin{lemma}\label{lem:rho}Let $a(t),tb(t)\in L^1([0,\infty))$ and $a(t)\in C^1([0,\infty))$, $b(t)\in C([0,\infty))$. Consider the auxiliary second order ODE with positive parameter $\lambda$,
\begin{equation}\label{rho-eqn}
\left\{
\begin{array}{ll}
\rho''(t)-a\rho'(t)+(b-\lambda^2-a')\rho(t)=0,\\
\rho(0)=1,\ \rho(\infty)=0.
\end{array}
\right.
\end{equation}
There exists a solution $\rho(t)$ which decays as $e^{-\lambda t}$ for large $t$.
\end{lemma}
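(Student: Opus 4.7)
The plan is to reduce \eqref{rho-eqn} to a Volterra integral equation near infinity and then solve it by contraction. Substituting $\rho(t) = e^{-\lambda t} w(t)$ transforms the ODE into
\begin{equation*}
w''(t) - (2\lambda + a(t))w'(t) + (\lambda a(t) + b(t) - a'(t)) w(t) = 0,
\end{equation*}
and the required asymptotics $\rho(t) \sim Ce^{-\lambda t}$ correspond to $w$ converging to a nonzero constant at infinity.

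Multiplying by the integrating factor $e^{-2\lambda t}$ gives $(e^{-2\lambda t}w')' = e^{-2\lambda t}[a w' - (\lambda a + b - a')w]$. Integrating from $t$ to $\infty$ produces an expression for $w'(t)$ that still contains $a'(s)$; since no integrability of $a'$ is assumed, we integrate by parts on the $a'w$ piece. The boundary at infinity vanishes (by the exponential factor), the boundary at $s=t$ contributes $a(t)w(t)$, and crucially the new $aw'$ integral produced exactly cancels the $aw'$ integral already present. The result is the derivative-free identity
\begin{equation*}
w'(t) = a(t)w(t) + \int_t^\infty e^{-2\lambda(s-t)}(b(s) - \lambda a(s))w(s)\,ds.
\end{equation*}
A second integration from $t$ to $\infty$ and Fubini's theorem on the resulting double integral yield the Volterra equation
\begin{equation*}
w(t) = 1 - \int_t^\infty a(\tau)w(\tau)\,d\tau - \int_t^\infty \frac{1 - e^{-2\lambda(s-t)}}{2\lambda}(b(s)-\lambda a(s))w(s)\,ds.
\end{equation*}

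The elementary bound $\frac{1-e^{-2\lambda(s-t)}}{2\lambda} \leq \min(s, 1/(2\lambda))$, together with $a, tb \in L^1([0,\infty))$, shows that the right-hand side is a contraction on $L^\infty([T_0,\infty))$ once $T_0$ is large enough. Banach's fixed point theorem then yields $w_\infty \in L^\infty([T_0,\infty))$ with $w_\infty(t)\to 1$, and differentiating the Volterra equation twice recovers the original ODE. Extending backward to $[0,T_0]$ by standard Cauchy-Lipschitz theory for linear ODEs and setting $\tilde\rho(t) := e^{-\lambda t}w_\infty(t)$ produces a $C^2$ solution on $[0,\infty)$ that decays like $e^{-\lambda t}$; division by $\tilde\rho(0)$ gives the normalized $\rho$ with $\rho(0)=1$ and $\rho(\infty)=0$.

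The principal obstacle is the $a'(t)$ coefficient, for which we have only $a\in C^1$ and no independent integrability hypothesis. Handling it directly would require bounds on $a'$, which are not available; the integration-by-parts trick moves the derivative off $a$ onto the exponential kernel (producing $-2\lambda a(s)e^{-2\lambda(s-t)}$) and onto $w$ (producing an $a(s)w'(s)$ that fortuitously cancels the $w'$ already present), leaving an integral equation whose estimates depend only on $\|a\|_{L^1}$ and $\|tb\|_{L^1}$. A minor secondary point is to check that $\tilde\rho(0)\neq 0$ so the normalization is legitimate, which holds because $w_\infty$ stays close to its limit $1$ for sufficiently small $\lambda$ within the relevant range.
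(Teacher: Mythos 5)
Your proposal is essentially correct, but it takes a genuinely different route from the paper's. The paper performs the same initial substitution ($\eta(t)=\rho(t)e^{\lambda t}$, your $w$), but then applies a Liouville-type change of variables $c(t)=e^{-\int_{t_0}^t(2\lambda+a(\tau))d\tau}$, $x=\int_{t_0}^t c(s)^{-1}ds$ to reduce \eqref{rho-eqn} to the normal form \eqref{y}, $y''+c^2(\lambda a+b-a')y=0$, and then invokes the asymptotic integration theorem of Hartman (Corollary 9.1 of Chapter XI in \cite{Hartman}), exactly as in the proof of Lemma \ref{lem:L^1}, to produce a solution $y\sim 1$. You instead recast the problem as a Volterra integral equation on $[T_0,\infty)$ and solve it by Banach's fixed point theorem, verifying the ODE afterwards by differentiating twice; this amounts to a self-contained proof of the Hartman-type asymptotics in the special case at hand. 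Your approach buys something concrete: the paper's verification that $\int_0^\infty x\,|c^2(t)(\lambda a+b-a')|\,dx$ is finite silently requires $a'\in L^1([0,\infty))$, which is \emph{not} among the hypotheses of the lemma ($a\in C^1\cap L^1$ does not imply $a'\in L^1$, e.g.\ $a(t)=e^{-t}\sin(e^t)$), whereas your integration-by-parts maneuver eliminates $a'$ from the integral equation entirely, so that all estimates depend only on $\|a\|_{L^1}$ and $\|tb\|_{L^1}$, as the hypotheses intend. The one weak point is the normalization: your assertion that $\tilde\rho(0)\neq 0$ ``because $w_\infty$ stays close to its limit $1$ for sufficiently small $\lambda$'' is not justified, since closeness to $1$ is guaranteed only on $[T_0,\infty)$ and the backward extension to $[0,T_0]$ could in principle vanish at $t=0$ (the decaying solution is unique up to scalar, so one cannot switch to another). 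You should flag this as an extra verification or hypothesis; note, however, that the paper's own proof never addresses $\rho(0)=1$ at all (nor the non-vanishing of $\rho$ on $[0,\infty)$, which is implicitly used when dividing by $\rho^2(s)$ in the proof of Lemma \ref{lem:u^p}), so this gap is shared with, not created by, your argument.
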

\begin{proof}
We postpone its proof in Appendix.
\end{proof}
With this lemma and lemma \ref{lem:varphi}, we can define the test function
$$\psi(t,x)=\rho(t)\varphi_{\lambda_0}(x)$$
for the sub-critical case to give the lower bound of solution's $L^p$ norm. The proof follows the same approach as Yordanov and Zhang \cite{YZ}.
\begin{remark}In the following, for notation's simplicity, we mix use $\varphi_{\lambda_0}$ as $\varphi$ without making confusion.
\end{remark}
\begin{lemma}\label{lem:u^p}
Assume that $a(t),\ tb(t)\in L^1([0,\infty))$ and $a(t)\in C^1([0,\infty))$, $b(t)\in C([0,\infty))$. Then,
there exists a positive constant $C_0 = C_0 (f,g,a,b,n,p,R,r_1,r_2,\lambda_0)$ such that
\begin{equation}\label{low:u^p}
\int_{\mathbb{R}^n}|u(x,t)|^pdx\geq C_0\varepsilon^p\langle t\rangle^{n-1-(n-1)p/2 }
\end{equation}
holds for $t\geq 0$.
\end{lemma}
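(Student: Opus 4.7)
The plan is to use a Yordanov--Zhang style argument with the test function $\psi(x,t):=\rho(t)\varphi_{\lambda_0}(x)$. Since $\rho$ solves \eqref{rho-eqn} and $\Delta_g\varphi_{\lambda_0}=\lambda_0^2\varphi_{\lambda_0}$, a direct computation shows that $\psi$ satisfies the conjugate equation $\psi_{tt}-(a\psi)_t+b\psi-\Delta_g\psi=0$. Set $F_1(t):=\int_{\mathbb{R}^n}u(x,t)\psi(x,t)\,dx$. Multiplying the equation for $u$ by $\psi$, integrating over $\mathbb{R}^n$ (a spatial cut-off handles the non-compact support of $\psi$, since $u$ itself lives in $|x|\leq t+R$), using the self-adjointness of $\Delta_g$ and integrating by parts in $t$, one arrives at the identity
$$\frac{d}{dt}J(t)=\int_{\mathbb{R}^n}|u(x,t)|^{p}\psi(x,t)\,dx\ \geq\ 0,\qquad J(t):=\int u_t\psi\,dx-\int u\psi_t\,dx+a(t)F_1(t).$$

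Since $\psi_t=(\rho'/\rho)\psi$, one can rewrite $J(t)=F_1'(t)+\mu(t)F_1(t)$ with $\mu(t):=a(t)-2\rho'(t)/\rho(t)$. As $J$ is non-decreasing, $J(t)\geq J(0)=\varepsilon\int[g+(a(0)-\rho'(0))f]\varphi_{\lambda_0}\,dx=:\varepsilon I_0$, which is non-negative by \eqref{ini2} and strictly positive unless the data are identically zero. Solving the linear differential inequality $F_1'+\mu F_1\geq \varepsilon I_0$ via the integrating factor $\exp(\int_0^t\mu\,ds)$ yields
$$F_1(t)\geq \varepsilon I_0\,e^{-\int_0^t\mu(s)\,ds}\int_0^t e^{\int_0^s\mu(\tau)\,d\tau}\,ds.$$
By Lemma~\ref{lem:rho} and $a\in L^1$, one has $\int_0^t\mu\,ds=\int_0^t a\,ds-2\log\rho(t)=2\lambda_0 t+O(1)$, so the displayed ratio is bounded below by a positive constant uniformly in $t\geq 0$; hence $F_1(t)\geq C\varepsilon$ for every $t\geq 0$.

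Once this uniform lower bound is in hand, H\"older's inequality on the support of $u$ gives
$$C\varepsilon\leq F_1(t)\leq \left(\int_{|x|\leq t+R}|u|^p\,dx\right)^{1/p}\left(\int_{|x|\leq t+R}\psi^{p'}\,dx\right)^{1/p'},\qquad p':=\frac{p}{p-1}.$$
The two-sided bound \eqref{2sided} together with $\rho(t)\sim e^{-\lambda_0 t}$ reduces the $L^{p'}$ integral to $e^{-\lambda_0 p' t}\int_0^{t+R} r^{n-1}\langle\lambda_0 r\rangle^{-(n-1)p'/2}e^{\lambda_0 p' r}\,dr$, which in turn is controlled by its value at the endpoint $r=t+R$, producing $\int_{|x|\leq t+R}\psi^{p'}\,dx\leq C\langle t\rangle^{n-1-(n-1)p'/2}$. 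Substituting this back and using the elementary identity $(n-1-(n-1)p'/2)(p/p')=-(n-1-(n-1)p/2)$ gives exactly \eqref{low:u^p}.

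The crux of the argument is the intermediate step, namely ensuring that the integrating factor $e^{\int_0^t\mu\,ds}$ does not swallow the $\varepsilon$-scale lower bound on $F_1(t)$; this is where the $L^1$ integrability of $a$ and the sharp asymptotic $\rho(t)\sim e^{-\lambda_0 t}$ supplied by Lemma~\ref{lem:rho} are essential. The rest, i.e.\ the Yordanov--Zhang type identity and the $L^{p'}$ estimate of $\psi$, is a direct adaptation of the classical argument for the semilinear wave equation without damping or mass.
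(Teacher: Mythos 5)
Your proposal is correct and takes essentially the same route as the paper's own proof: the same test function $\psi=\rho(t)\varphi_{\lambda_0}(x)$ built from Lemma \ref{lem:rho} and Lemma \ref{lem:varphi}, the same functional $G_1(t)=\int_{\mathbb{R}^n}u\psi\,dx$ satisfying the differential inequality $G_1'+\bigl(a-2\rho'/\rho\bigr)G_1\geq\varepsilon C_{f,g}$ with $C_{f,g}$ coming from \eqref{ini2}, the same integrating-factor lower bound $G_1(t)\geq\varepsilon C\,e^{-\int_0^t\mu}\int_0^te^{\int_0^s\mu}ds$, and the same H\"older argument with the $L^{p'}$ bound on $\psi$ from \eqref{2sided}. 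The differences are purely cosmetic (you package the identity as monotonicity of $J(t)$ and insert the asymptotic $\rho(t)\sim e^{-\lambda_0t}$ earlier in the computation), and your treatment matches the paper's even in the minor point that the lower bound on $G_1$ degenerates as $t\to0$, which both arguments leave implicit.
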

\begin{proof}
Define the functional
$$G_1(t):=\int_{\mathbb{R}^n}u(x,t)\psi(x,t)dx,$$ then by H\"{o}lder inequality, we have
\begin{equation}\int_{\mathbb{R}^n}|u(x,t)|^pdx\geq\frac{|G_1(t)|^p}{(\int_{|x|\leq t+R}|\psi|^{p'}(t,x)dx)^{p-1}}.\label{factor}\end{equation}
We estimate the lower bound of $|G_1(t)|$ and upper bound of $\int_{|x|\leq t+R}|\psi|^{p'}(t,x)dx$ respectively.
From the definition of energy solution, we have
\begin{eqnarray*}
&&\int_0^t\int_{\mathbb{R}^n}u_{tt}\psi dxds-\int_{0}^t\int_{\mathbb{R}^n}u\Delta_g\psi dxds\\
&&+\int_0^t\int_{\mathbb{R}^n}\partial_s(a(s)\psi u)-\partial_s(a(s)\psi)u+b(s)\psi udxds=\int_0^t\int_{\mathbb{R}^n}|u|^p\psi dxds.
\end{eqnarray*}
Utilizing $\Delta_g\varphi(x)={\lambda_0^2}\varphi$, we obtain:
\begin{eqnarray*}&&\int_0^t\int_{\mathbb{R}^n}u_{tt}\psi dxds+\int_{0}^t\int_{\mathbb{R}^n}u\varphi(-\lambda_0^2\rho-a'(s)\rho-a(s)\rho'+b\rho)dxds\\
&&+\int_{\mathbb{R}^n}a(s)\psi udx\bigg|_0^t =\int_0^t\int_{\mathbb{R}^n}|u|^p\psi dxds.
\end{eqnarray*}
Since \eqref{rho-eqn}, the above equation simplifies to
$$\int_0^t\int_{\mathbb{R}^n}u_{tt}\psi dxds-\int_{0}^t\int_{\mathbb{R}^n}u\varphi\rho''dxds+\int_{\mathbb{R}^n}a(s)\psi udx\bigg|_0^t =\int_0^t\int_{\mathbb{R}^n}|u|^p\psi dxds.$$
Thus the integration by parts gives
$$\int_{\mathbb{R}^n}(u_t\psi-u\psi_t+a(s)u\psi) dx\bigg|_0^t=\int_0^t\int_{\mathbb{R}^n}|u|^p\psi dxds.$$
As the righthand side integral is positive, we obtain
$$G_1'(t)+\big(a(t)-2\frac{\rho'(t)}{\rho(t)}\big)G_1(t)\geq\varepsilon \int_{\mathbb{R}^n}\bigg(\rho(0)g(x)+(a(0)\rho(0)-\rho'(0))f(x)\bigg)\varphi(x)dx.$$
Denote
\begin{equation}
C_{f,g}:=\int_{\mathbb{R}^n}\bigg(g(x)+(a(0)-\rho'(0))f(x)\bigg)\varphi(x)dx,\label{C_fg}
\end{equation}
then by the compact support of $g(x)$ and $f(x)$ and assumption \eqref{ini2}, $C_{f,g}$ is finite and positive. We come to the differential inequality of $G_1$
$$G_1'(t)+\big(a(t)-2\frac{\rho'(t)}{\rho(t)}\big)G_1(t)\geq\varepsilon C_{f,g}.$$
Setting $A(t)=e^{\int_0^ta(\tau)d\tau}$, multiplying $\frac{A(t)}{\rho^2(t)}$ on two sides and then integrating over $[0,t]$, we derive
the lower bound of $G_1$
\begin{equation}\label{G1}
G_1(t)\geq\varepsilon C_{f,g}\frac{\rho^2(t)}{{A(t)}}\int_0^t\frac{{A(s)}}{\rho^2(s)}ds\geq\varepsilon C_{f,g,a}\int_0^t\frac{\rho^2(t)}{\rho^2(s)}ds.
\end{equation}
On the other hand, the denominator of \eqref{factor} can be estimated by using \eqref{2sided} and integration by parts,
\begin{eqnarray}
&&\int_{|x|\leq t+R}|\psi|^{p'}(t,x)dx = |\rho|^\frac{p}{p-1}(t)\int_{|x|\leq t+R}\varphi_{\lambda_0}^{p'}(x)dx\nonumber\\
&&\leq|\rho|^\frac{p}{p-1}(t)\lambda_0^{-n}\int_{\lambda_0|x|\leq\lambda_0(t+R)}D_1^{p'}\langle\lambda_0|x| \rangle^{-\frac{n-1}2p'} e^{p'\lambda_0|x|}d\lambda_0x\nonumber\\
&&\leq|\rho|^\frac{p}{p-1}(t)\cdot C_{\varphi}\lambda_0^{-1-\frac{n-1}2\frac{p}{p-1}}(t+R)^{n-1-\frac{n-1}2\frac{p}{p-1}}e^{\frac{p}{p-1}\lambda_0(t+R)}.\label{deno}
\end{eqnarray}
Combing the estimate \eqref{G1}, \eqref{deno} and \eqref{factor}, we now have
\begin{eqnarray*}\int_{\mathbb{R}^n}|u(x,t)|^pdx&\geq&\frac{\varepsilon^p C^p_{f,g,a}|\rho|^{p}(t)(\int_0^t\frac{1}{\rho^2(s)}ds)^p}
{C^{p-1}_{\varphi,R}\lambda_0^{-(p-1)-\frac{n-1}{2}p}(1+t)^{(n-1)(p-1)-\frac{n-1}{2}p}e^{\lambda_0p(t+R)}}\\
&>&C_0\varepsilon^p(1+t)^{(n-1)(1-\frac p2)}|\rho|^{p}(t)(\int_0^t\frac{1}{\rho^2(s)}ds)^pe^{-p\lambda_0t}
\end{eqnarray*}
Since $\rho(t)\sim e^{-\lambda_0t}$ for large $t$, the estimate \eqref{low:u^p} is obtained.
\end{proof}
\begin{remark}As matter of fact, this lemma can be also inferred from Lemma \ref{lem:aux} and Proposition \ref{prop:identity}, see \cite{WY}.
We here give an independent proof which does not rely on any sign of $a(t)$ and $b(t)$.
\end{remark}
Now, we are in the position to give the proof of Theorem \ref{thm:sub}. First utilizing the compact support of solution, applying the H\"{o}lder inequality on \eqref{low:G(t)} gives:
\begin{eqnarray}
G(t)&\geq& C_{r_1,r_2}\int_0^t\int_0^{s_2}\int_{\mathbb{R}^n}|u|^pdxds_1ds_2\nonumber\\
&\geq& C_{r_1,r_2}\int_0^tds_2\int_0^{s_2}G^p(s_1)(s_1+R)^{(1-p)n}ds_1.\label{iter-1}
\end{eqnarray}
Second, combining \eqref{low:G(t)} and \eqref{low:u^p}, we have:
\begin{eqnarray}
G(t)&\geq& C_{r_1,r_2}\int_0^t ds_2 \int_0^{s_2} C_0\varepsilon^p \langle t\rangle ^{(n-1)(1-p/2)} ds_1\nonumber\\
&\geq &C_0C_{r_1,r_2} \ep^p (1+t)^{-(n-1)p/2}\int_0^tds_2\int_0^{s_2}s_1^{n-1}ds_1\nonumber\\
&=&C_2\ep^p(1+t)^{-(n-1)p/2}t^{n+1},\label{iter-0}
\end{eqnarray}
where $C_2=\frac{C_0C_{r_1,r_2}}{n(n+1)}$
Then following same iteration argument of Theorem 2.1 in \cite{LT}, the blow-up result with the lifespan upper bound for sub-critical case can be obtained. For reader's convenience, we give the details here. Specifically, we assume that $G(t)$ satisfies the inequalities
\begin{equation}\label{iter-plug}
G(t)>D_j(1+t)^{-a_j}t^{b_j}\ \ \mbox{for}\ \ t\geq 0
\end{equation}
with positive constants $D_j,\ a_j,\ b_j,\ j\in\mathbb{N}$. Plugging \eqref{iter-plug} into \eqref{iter-1}, we obtain:
$$G(t)>\frac{C_{r_1,r_2}D_j^p}{(pb_j+2)^2}(1+t)^{-{n(p-1)-pa_j}}t^{pb_j+2},$$
where the above constants $\{a_j\},\ \{b_j\},\ \{D_j\}$ can be chosen to satisfy the iteration scheme:
$$a_{j+1}=pa_j+n(p-1),\ b_{j+1}=pb_j+2\ \ \mbox{and}\ \ D_{j+1}= \frac{C_{r_1,r_2}D_j^p}{(pb_j+2)^2}.$$
The initial case $j=1$ is derived from \eqref{iter-0}. We may take
$$a_1=(n-1)\frac{p}2,\ \ b_1=n+1\ \ \mbox{and}\ D_1=C_2\ep^p.$$
Hence, $a_j$ and $b_j$ can be derived directly
\begin{equation}a_j=p^{j-1}\left((n-1)\frac{p}2+n\right)-n,\ b_j=p^{j-1}(n+1+\frac2{p-1})-\frac2{p-1}.\label{a_jb_j}\end{equation}
Moreover, $$ D_{j+1}=\frac{C_{r_1,r_2}D_j^p}{b_{j+1}^2}\geq C_3\frac{D_j^p}{p^{2j}}.$$
where $C_3=\frac{C_{r_1,r_2}}{(n+1+\frac2{p-1})^2}$.
Taking logarithm on both sides and the iteration on $\log D_j$ gives:
\begin{eqnarray*}
\log D_j&\geq& p\log D_{j-1}-2(j-1)\log p+\log C_3\\
&\geq& p^2\log D_{j-2}-2(p(j-2)+(j-1))\log p+(p+1)\log C_3\\
&\geq&\cdots\\
&\geq&p^{j-1}\log D_1-2\log p\sum_{k=1}^{j-1}kp^{j-1-k}+\log C_3\sum_{k=1}^{j-1}p^k.
\end{eqnarray*}
As
$$\sum_{k=1}^{j-1}kp^{j-1-k}=\frac{1}{p-1}(\frac{p^j-1}{p-1}-j)\ \ \mbox{and}\ \ \sum_{k=1}^{j-1}p^k=\frac{p-p^j}{1-p},$$
we have
\begin{eqnarray*}
\log D_j&\geq& p^{j-1}\log D_1-\frac{2\log p}{p-1}\left(\frac{p^j-1}{p-1}-j\right)+\log C_3\frac{p-p^j}{1-p}\\
&=&p^{j-1}\bigg(\log D_1-\frac{2p\log p}{(p-1)^2}+\frac{p\log C_3}{p-1}\bigg)+\frac{2\log p}{p-1}j+\frac{2\log p}{(p-1)^2}+\frac{p\log C_3}{1-p}.
\end{eqnarray*}
Consequently, for $j>\left[\frac{p\log C_3}{2\log p}-\frac{1}{p-1}\right]+1$,
\begin{equation}\label{D_j}D_j\geq\exp\left\{p^{j-1}(\log D_1-S_p(\infty))\right\}\end{equation}
where
$$ S_p(\infty):=\frac{2p\log p}{(p-1)^2}-\frac{p\log C_3}{p-1}.$$
Inserting \eqref{a_jb_j} for $a_j,\ b_j$ and \eqref{D_j} for $D_j$ into \eqref{iter-plug} gives
\begin{eqnarray}
G(t)&\geq&\exp\left(p^{j-1}(\log D_1-S_p(\infty))\right)(1+t)^{-\alpha p^{j-1}+n}t^{\beta p^{j-1}-\frac{2}{p-1}}\nonumber\\
&\geq&\exp\big(p^{j-1}J(t)\big)(1+t)^{n}t^{-\frac{2}{p-1}}\label{contr}
\end{eqnarray}
where we denote
$$\alpha:=(n-1)\frac{p}2+n,\ \beta:=n+1+\frac2{p-1}$$
and
$$J(t):=\log D_1-S_p(\infty)-\alpha\log(1+t)+\beta\log(t).$$
For $t>1$, we have
\begin{eqnarray*}
J(t)&\geq& \log D_1-S_p(\infty)-\alpha\log(2t)+\beta\log(t)\\
&\geq&\log D_1-S_p(\infty)+(\beta-\alpha)\log(t)-\alpha\log2\\
&=&\log(D_1\cdot t^{\beta-\alpha})-S_p(\infty)-\alpha\log2.
\end{eqnarray*}
Noticing that $$\beta-\alpha=\frac{\gamma(p,n)}{2(p-1)},$$
thus if
$$t>\max\left(\left(\frac{e^{[S_p(\infty)+\alpha\log2]+1}}{C_2\varepsilon^p}\right)^{\frac{2(p-1)}{\gamma(p,n)}},1\right),$$
we then get $J(t)>1$, and this in turn give that $G(t)\rightarrow\infty$ by taking $j\rightarrow\infty$ in \eqref{contr}.
Therefore, for $\varepsilon<\varepsilon_0$, we obtain the desired upper bound,
$$T\leq C_4\varepsilon^{-\frac{2p(p-1)}{\gamma(p,n)}}$$
with
$$C_4:=\left(\frac{e^{(S_p(\infty)+\alpha\log2)+1}}{C_2}\right)^{2(p-1)/\gamma(p,n)}.$$
This completes our proof of Theorem $1.3$.

\section{Critical case: Proof of Theorem \ref{thm:cri}}\label{pf:cri}

In this section, we focus on the critical case. We first introduce some auxiliary functions. Given $\lambda_0\in (0,\alpha/2]$ and $q>-1$, let
\begin{eqnarray}
\label{aq}
\xi_q (x,t) & = & \int_{0}^{\lambda_0}e^{-\lambda(t+R)}\cosh\lambda t \: \varphi_\lambda(x)\lambda^{q}
d\lambda,\\
\eta _q (x,t,s) & = & \int_{0}^{\lambda_0}e^{-\lambda(t+R)}\frac{\sinh \lambda(t-s)}{\lambda(t-s)}
\: \varphi_\lambda(x)\lambda^{q} d\lambda,
\label{bq}
\end{eqnarray}
for $(x,t)\in \mathbb{R}^n\times \mathbb{R}$ and $s\in\mathbb{R}.$ Useful estimates are collected in the next lemma.

\begin{lemma}
\label{lem:aux}
Let $n\geq 2$. There exists $\lambda_0\in (0,\alpha/2]$, such that the following hold:

(i) if $0<q$, $|x|\leq R$ and $0\leq t$, then
\begin{eqnarray*}
\xi_q (x,t) & \geq & A_0,\\
\eta_q (x,t,0) & \geq & B_0\langle t\rangle^{-1};
\end{eqnarray*}

(ii) if $0<q$, $|x|\leq s+R$ and $0\leq s<t$, then
\begin{eqnarray*}
\eta_q (x,t,s) & \geq & B_1 \langle t\rangle^{-1}\langle s\rangle^{-q};
\end{eqnarray*}

(iii) if $(n-3)/2<q$, $|x|\leq t+R$ and $0<t$, then
\begin{eqnarray*}
\eta_q (x,t,t) & \leq & B_2 \langle t\rangle^{-(n-1)/2}\langle t-|x| \rangle^{(n-3)/2-q}.
\end{eqnarray*}
Here $A_0$ and $B_k$, $k=0,1,2,$ are positive constants depending only on $\alpha$, $q$ and $R$,
while $\langle s\rangle =3+|s|$.
\end{lemma}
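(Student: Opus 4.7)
The plan is to reduce every bound to a one-dimensional integral in $\lambda$ by inserting the two-sided pointwise control of $\varphi_\lambda$ from Lemma~\ref{lem:varphi} into the defining integrals and then exploiting the elementary identities
\begin{align*}
e^{-\lambda(t+R)}\cosh\lambda t&=\tfrac12\bigl(e^{-\lambda R}+e^{-\lambda(2t+R)}\bigr),\\
e^{-\lambda(t+R)}\sinh\lambda(t-s)&=\tfrac12 e^{-\lambda(s+R)}\bigl(1-e^{-2\lambda(t-s)}\bigr),
\end{align*}
which isolate the dominant factor $e^{-\lambda R}$ or $e^{-\lambda(s+R)}$. The companion inequality $(1-e^{-y})/y\geq 1/(1+y)$, equivalent to $e^y\geq 1+y$, will handle the quotient $\sinh\lambda(t-s)/(\lambda(t-s))$ uniformly for $\lambda(t-s)\geq 0$.

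For the lower bounds in (i) and (ii), the monotonicity of $r\mapsto \langle\lambda r\rangle^{-(n-1)/2}$ together with $e^{\lambda|x|}\geq 1$ yields the uniform estimate $\varphi_\lambda(x)\geq D_0\langle\lambda(s+R)\rangle^{-(n-1)/2}$ for every $|x|\leq s+R$ and $0<\lambda\leq\lambda_0$. Inserting this into the $\eta_q$-integral gives
\[
\eta_q(x,t,s)\geq D_0\int_0^{\lambda_0}\frac{e^{-\lambda(s+R)}\langle\lambda(s+R)\rangle^{-(n-1)/2}\lambda^q}{1+2\lambda(t-s)}\,d\lambda.
\]
The substitution $\mu=\lambda(s+R)$ extracts the prefactor $(s+R)^{-(q+1)}$, and on $\mu\in(0,1)$ one has $s+R+2\mu(t-s)\leq 2t-s+R\leq 2(t+R)$, so the kernel is bounded below by $(s+R)/(2(t+R))$. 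Combined with the strictly positive constant $\int_0^{\min(1,\lambda_0(s+R))}e^{-\mu}\langle\mu\rangle^{-(n-1)/2}\mu^q\,d\mu$ (finite at the origin because $q>0$), this yields $\eta_q(x,t,s)\geq B_1\langle t\rangle^{-1}\langle s\rangle^{-q}$. Specialising to $s=0$ recovers the $\eta_q$ estimate in (i); the $\xi_q$ estimate follows from the same argument, using $\cosh\lambda t\,e^{-\lambda(t+R)}\geq\tfrac12 e^{-\lambda R}$ and $\varphi_\lambda(x)\geq D_0\langle\lambda R\rangle^{-(n-1)/2}$ for $|x|\leq R$.

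For the upper bound (iii) I take $s=t$ (so the $\sinh$-quotient equals its limit $1$) and set $\tau=t+R-|x|\geq 0$; the upper bound of Lemma~\ref{lem:varphi} gives
\[
\eta_q(x,t,t)\leq D_1\int_0^{\lambda_0}e^{-\lambda\tau}\langle\lambda|x|\rangle^{-(n-1)/2}\lambda^q\,d\lambda,
\]
and I split the analysis along the identity $|x|+\tau=t+R\sim\langle t\rangle$. If $\tau\geq(t+R)/2$ I discard $\langle\lambda|x|\rangle^{-(n-1)/2}\leq 1$ and rescale $\mu=\lambda\tau$ to obtain $\eta_q(x,t,t)\leq C\langle\tau\rangle^{-q-1}$, which, since $\langle\tau\rangle\sim\langle t\rangle$ in this regime, matches the claimed bound. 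If instead $|x|\geq(t+R)/2$, I split the $\lambda$-integral at $\lambda=1/|x|$: the part over $(0,1/|x|]$ is handled as in the previous case and is $\leq C\langle t\rangle^{-q-1}$, while on $(1/|x|,\lambda_0]$ I use $\langle\lambda|x|\rangle^{-(n-1)/2}\leq(\lambda|x|)^{-(n-1)/2}$ and rescale $\mu=\lambda\tau$. The resulting integral $\int_0^\infty e^{-\mu}\mu^{q-(n-1)/2}\,d\mu$ converges precisely because $q>(n-3)/2$, producing the bound $C|x|^{-(n-1)/2}\langle\tau\rangle^{(n-3)/2-q}\leq C\langle t\rangle^{-(n-1)/2}\langle\tau\rangle^{(n-3)/2-q}$.

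The main technical obstacle is the case split in (iii) and the verification that both regimes deliver the \emph{same} product $\langle t\rangle^{-(n-1)/2}\langle\tau\rangle^{(n-3)/2-q}$; this rests on combining the exponential scaling $\mu=\lambda\tau$ with the polynomial scaling $\mu=\lambda|x|$ in a balanced way, and the sharpness of the hypothesis $q>(n-3)/2$ appears precisely as the integrability condition at $\lambda=0$ for the factor $\lambda^{q-(n-1)/2}$ that arises in the second regime.
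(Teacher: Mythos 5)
The paper itself offers no proof of this lemma: it is quoted verbatim, with proof deferred to Lemma 3.1 of \cite{WY}, just as Lemma \ref{lem:varphi} is quoted from Lemma 2.2 there. Your proposal is therefore a self-contained reconstruction, and its skeleton is the right one: insert the two-sided bounds \eqref{2sided} into the $\lambda$-integrals, factor the exponentials via $e^{-\lambda(t+R)}\cosh\lambda t=\tfrac12\bigl(e^{-\lambda R}+e^{-\lambda(2t+R)}\bigr)$ and $e^{-\lambda(t+R)}\sinh\lambda(t-s)=\tfrac12 e^{-\lambda(s+R)}\bigl(1-e^{-2\lambda(t-s)}\bigr)$, then rescale. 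Parts (i) and (ii) are correct as written: the inequality $(1-e^{-y})/y\ge 1/(1+y)$, the monotone bound $\varphi_\lambda(x)\ge D_0\langle\lambda(s+R)\rangle^{-(n-1)/2}$ on $|x|\le s+R$, and the kernel bound $1+2\mu(t-s)/(s+R)\le 2(t+R)/(s+R)$ for $\mu\le 1$ all check out, and restricting the rescaled integral to $\mu\le\min(1,\lambda_0 R)$ gives a lower bound uniform in $s$. (The two brackets $\sqrt{1+|\cdot|^2}$ of Lemma \ref{lem:varphi} and $3+|\cdot|$ of the present lemma are comparable, so mixing them only affects constants.)

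Part (iii), however, contains one step that fails as literally written. In your regime $|x|\ge(t+R)/2$, on $(1/|x|,\lambda_0]$ you rescale $\mu=\lambda\tau$ and extend the integral to $\int_0^\infty e^{-\mu}\mu^{q-(n-1)/2}\,d\mu$; this produces the factor $\tau^{(n-3)/2-q}$, \emph{not} $\langle\tau\rangle^{(n-3)/2-q}$ as you claim. Since $(n-3)/2-q<0$, silently replacing $\tau$ by $\langle\tau\rangle$ is an unjustified strengthening exactly where it matters, namely for small $\tau=t+R-|x|$, which in this regime can be arbitrarily small and even $0$ when $|x|=t+R$ (where the rescaling is vacuous). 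Contrast this with your first regime, where $\tau\ge(t+R)/2\ge R/2$ is bounded below, so $\tau^{-q-1}\le C\langle\tau\rangle^{-q-1}$ is legitimate there. The repair is immediate with tools you already use: keep the finite upper limit after rescaling, so that
$\int_{\tau/|x|}^{\lambda_0\tau}e^{-\mu}\mu^{q-(n-1)/2}\,d\mu\le\min\bigl\{\Gamma(q-(n-3)/2),\,C(\lambda_0\tau)^{q-(n-3)/2}\bigr\}$,
which yields $C|x|^{-(n-1)/2}\min\bigl\{\tau^{(n-3)/2-q},1\bigr\}\le C'|x|^{-(n-1)/2}\langle\tau\rangle^{(n-3)/2-q}$; equivalently, split $\tau\le 1$ (bound $e^{-\lambda\tau}\le 1$ and use integrability of $\lambda^{q-(n-1)/2}$ at $0$, which is precisely where $q>(n-3)/2$ enters) from $\tau\ge 1$. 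The same caveat applies to your phrase ``handled as in the previous case'' for the piece over $(0,1/|x|]$: the correct route there is $\int_0^{1/|x|}\lambda^q\,d\lambda=|x|^{-q-1}/(q+1)\le C\langle t\rangle^{-q-1}$, not a $\tau$-rescaling. With that patch, plus the one-line comparisons you leave implicit ($\langle t\rangle^{-q-1}\le C\langle t\rangle^{-(n-1)/2}\langle t-|x|\rangle^{(n-3)/2-q}$ using $\langle t-|x|\rangle\le C\langle t\rangle$ and the negativity of the exponent, and $\langle\tau\rangle\sim\langle t-|x|\rangle$ at the end), part (iii) goes through and the whole lemma is proved.
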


\begin{proof}
See Lemma 3.1 in \cite{WY}.
\end{proof}
Next, we shall consider the lower bound of averaged functionals
$$F(t)=\int_{\mathbb{R}^n}u(x,t)\varphi_\lambda(x)dx.$$
Testing equation \eqref{main} by $\varphi_\lambda(x)$ to derive:
\begin{equation}\label{F(t)}
F''(t)+a(t)F'(t)+(b(t)-\lambda^2)F(t)=\int_{\mathbb{R}^n}|u|^p\varphi_\lambda(x)dx.
\end{equation}
Define the operator:
$$L_{a,b}:=\partial_t^2+a(t)\partial_t+b(t)-\lambda^2,$$
and
$$L_{r_1,r_2}:=\partial_t^2+(r_1(t)-r_2(t))\partial_t-\lambda^2.$$
Supposing $L_{a,b}y(t,s;\lambda)=0$, by utilizing \eqref{r_2}, it is easy to find that $u(t,s,\lambda):=y(t,s;\lambda)e^{A_2(t,s)}$ satisfies:
\begin{equation}\label{u}
L_{r_1,r_2}u=0.
\end{equation}
Moreover, under the assumption of $a(t)$ and $b(t)$ for critical case in Theorem \ref{thm:cri}, $a(t),\ b(t)\geq0$, then by Lemma \ref{lem:L^1}, $r_1(t)-r_2(t)=a(t)-2r_2(t)$ is positive for large $t$. Hence the fundamental solution system $\{\chi_1(t,s;\lambda),\chi_2(t,s;\lambda)\}$ of \eqref{u} can be found with the aid of lemma 2.3 in \cite{WY-damped}, which satisfy the initial conditions
 $$L_{r_1,r_2}\chi_1(t,s;\lambda)=0,\ \ \ \chi_1(s,s;\lambda)=1,\ \ \ \partial_t\chi_1(s,s;\lambda)=0,$$
 $$L_{r_1,r_2}\chi_2(t,s;\lambda)=0,\ \ \ \chi_2(s,s;\lambda)=0,\ \ \ \partial_t\chi_2(s,s;\lambda)=1,$$
 and the asymptotic estimates for $t\geq s\geq 0$:
\begin{eqnarray*}
(i) && \cosh\lambda(t-s)\geq \chi_1(t,s;\lambda) \geq e^{-\|r_1-r_2\|_{L^1}}\cosh\lambda(t-s),\\
(ii) &&  e^{\|r_1-r_2\|_{L^1}}\frac{\sinh \lambda(t-s)}{\lambda}\geq \chi_2(t,s;\lambda) \geq e^{-2\|r_1-r_2\|_{L^1}}\frac{\sinh \lambda(t-s)}{\lambda}.
\end{eqnarray*}
Consequently, we find the fundamental solution of $L_{a,b}y=0$, by setting
$$y_1(t,s,\lambda)=[r_2(s)\chi_2(t,s;\lambda)+\chi_1(t,s;\lambda)]e^{-A_2(t,s)}$$
$$y_2(t,s;\lambda)=\chi_2(t,s;\lambda)e^{-A_2(t,s)}.$$
Returning to \eqref{F(t)}, the Duhamel's principle gives
\begin{eqnarray*}
F(t) &=& y_1(t,0;\lambda)F(0) + y_2(t,0;\lambda)F'(0) + \int_0^t y_2(t,s;\lambda)\int_{\mathbb{R}^n}|u(x,s)|^p\varphi_\lambda(x)dxds\\
&=& \chi_1(t,0;\lambda)e^{-A_2(t,0)}F(0)+\chi_2(t,0;\lambda)e^{-A_2(t,0)}[r_2(0)F(0)+F'(0)]\\
&&+\int_0^t\chi_2(t,s;\lambda)e^{-A_2(t,s)}\int_{\mathbb{R}^n}|u(x,s)|^p\varphi_\lambda(x)dxds.
\end{eqnarray*}
Making use of asymptotic property of $\chi_1,\ \chi_2$ and the non-negativity of $f(x)$ and $g(x)+r_2(0)f(x)$, we have
\begin{eqnarray}
&&\int_{\mathbb{R}^n} u(x,t)\varphi_\lambda(x)dx\geq \varepsilon e^{-\|r_1-r_2\|_{L^1}-\|r_2\|_{L^1}}\cosh (\lambda t)\int_{\mathbb{R}^n} f(x)\varphi_{\lambda}(x)dx\nonumber\\
&&+\varepsilon e^{-2\|r_1-r_2\|_{L^1}-\|r_2\|_{L^1}}\frac{\sinh (\lambda t)}{\lambda}\int_{\mathbb{R}^n} [r_2(0)f(x)+g(x)]\varphi_{\lambda}(x)dx\\
&&+e^{-2\|r_1-r_2\|_{L^1}-\|r_2\|_{L^1}}\int_0^t \frac{\sinh (\lambda(t-s))}{\lambda}\int_{\mathbb{R}^n} |u(x,s)|^p\varphi_\lambda(x)dxds.\nonumber
\end{eqnarray}
Multiplying the above inequality by $\lambda^q e^{-\lambda(t+R)}$, integrating on $[0,\lambda_0]$ and interchanging the order of integration between $\lambda$ and  $x$, we get the following Proposition.
\begin{proposition}
\label{prop:identity}
Let $q>-1$ and the assumptions of Theorem \ref{thm:cri} be fulfilled. That is  $a(t),\ tb(t)\in L^1([0,\infty))$, $a(t)\in C^1([0,\infty))$, $b(t)\in C([0,\infty))$ and $f(x)$ and $r_2(0)f(x)+g(x)$ are non-negative. Then we have following lower bound estimate,
\begin{equation}
\label{final-equal}
\begin{array}{lll}
\displaystyle \int_{{\mathbb{R}}^n}u(x,t) \eta_{q}(x,t,t) dx \ge \varepsilon e^{-\|r_1-r_2\|_{L^1}-\|r_2\|_{L^1}}\!\!\!\int_{{\mathbb{ R}^n}}\!\!f(x)\xi_{q}(x,t)\: dx\\
\displaystyle+ \varepsilon e^{-2\|r_1-r_2\|_{L^1}-\|r_2\|_{L^1}}t\int_{{\mathbb{R}}^n}\!\!(r_2(0)f(x)+g(x)) \eta_{q}(x,t,0) dx\\
\displaystyle+e^{-2\|r_1-r_2\|_{L^1}-\|r_2\|_{L^1}}\!\!\!\int_0^t(t-s) \int_{{\mathbb{R}}^n}|u(x,s)|^p \eta_{q}(x,t,s) dxds
\end{array}
\end{equation}
for all $t\in (0,T_\varepsilon).$
\end{proposition}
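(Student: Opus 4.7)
My plan is to start from the pointwise-in-$\lambda$ lower bound for the averaged quantity $F(t)=\int_{\mathbb{R}^n}u(x,t)\varphi_\lambda(x)\,dx$ that was derived immediately before the statement (via the Duhamel formula together with the asymptotic bounds on $\chi_1,\chi_2$), then multiply through by the weight $\lambda^{q} e^{-\lambda(t+R)}$, integrate in $\lambda$ over $[0,\lambda_0]$, and apply Fubini to interchange the $\lambda$-integration with the $x$-integration (and, for the nonlinear term, with the $s$-integration as well). Every $\lambda$-integral that appears then matches one of the auxiliary functions $\xi_q$ or $\eta_q$ defined in \eqref{aq}--\eqref{bq}.

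For the left-hand side, I use the elementary limit $\lim_{s\to t}\sinh(\lambda(t-s))/(\lambda(t-s))=1$ to read off from \eqref{bq} that $\eta_q(x,t,t)=\int_0^{\lambda_0}e^{-\lambda(t+R)}\varphi_\lambda(x)\lambda^q\,d\lambda$, which converts $\int_0^{\lambda_0}\lambda^q e^{-\lambda(t+R)}F(t)\,d\lambda$ into $\int u(x,t)\,\eta_q(x,t,t)\,dx$. For the $\cosh(\lambda t)$ term on the right, \eqref{aq} directly produces $\xi_q(x,t)$. For the two $\sinh$-terms I rewrite $\sinh(\lambda\tau)/\lambda=\tau\cdot\sinh(\lambda\tau)/(\lambda\tau)$ with $\tau=t$ and $\tau=t-s$ respectively; the leading factor $\tau$ pulls out of the $\lambda$-integral and what remains is precisely $\eta_q(x,t,0)$ (yielding the $t\,\eta_q(x,t,0)$ factor in the $g$-term) or $\eta_q(x,t,s)$ (yielding the $(t-s)\,\eta_q(x,t,s)$ factor in the nonlinear term). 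The constant prefactors $e^{-\|r_1-r_2\|_{L^1}-\|r_2\|_{L^1}}$ and $e^{-2\|r_1-r_2\|_{L^1}-\|r_2\|_{L^1}}$ are independent of $\lambda$ and pass through unchanged; they were produced by combining the lower bounds on $\chi_1,\chi_2$ from Lemma 2.3 of \cite{WY-damped} with the bound $|A_2(t,s)|\le \|r_2\|_{L^1}$ coming from Lemma \ref{lem:L^1}.

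The only technical point I need to justify is the use of Fubini. The first two right-hand-side terms are absolutely integrable because $f$ and $g$ are compactly supported, $\varphi_\lambda$ is bounded on compacts by Lemma \ref{lem:varphi}, and the condition $q>-1$ makes $\lambda^q$ integrable near $0$. For the nonlinear term the integrand is non-negative, so Tonelli's theorem applies unconditionally, and the finite-propagation bound $|x|\le s+R$ keeps the spatial integral finite on $[0,t]$. I do not anticipate a genuinely difficult step: the proposition is essentially a mechanical reweighting of the pointwise-in-$\lambda$ estimate into a form tailored to feed a Yordanov--Zhang style iteration in the sequel.
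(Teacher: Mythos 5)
Your proposal is correct and follows essentially the same route as the paper: there, too, the pointwise-in-$\lambda$ lower bound for $F(t)=\int_{\mathbb{R}^n}u(x,t)\varphi_\lambda(x)\,dx$ (obtained from Duhamel's formula and the bounds on $\chi_1,\chi_2$ together with $|A_2(t,s)|\le\|r_2\|_{L^1}$) is multiplied by $\lambda^q e^{-\lambda(t+R)}$, integrated over $[0,\lambda_0]$, and the order of integration is interchanged so that the resulting $\lambda$-integrals are recognized as $\xi_q$ and $\eta_q$. Your explicit Fubini/Tonelli justification and the rewriting $\sinh(\lambda\tau)/\lambda=\tau\cdot\sinh(\lambda\tau)/(\lambda\tau)$ are details the paper leaves implicit, but the argument is the same.
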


We further define the functional
\begin{equation}
\widetilde{F}(t):=\int_{\mathbb{R}^n} u(x,t) \eta_{q}(x,t,t)dx,\ q>-1.
\end{equation}
Combing Proposition \ref{prop:identity} and Lemma \ref{lem:aux}, we obtain the convenient
iteration frame Proposition \ref{prop:frame} by same argument as proof of Proposition 4.2 in \cite{WY}.
\begin{proposition}
\label{prop:frame}
Suppose that the assumptions in Theorem \ref{thm:cri} are fulfilled and choose
$q=(n-1)/2-1/p.$ There exists a positive constant $C=C(n,p,R,r_1,r_2)$, such that
\begin{equation}
\label{frame}
\widetilde{F}(t)  \geq \frac{C}{\langle t\rangle}
\int_0^t \frac{t-s}{\langle s\rangle}\frac{\widetilde{F}(s)^p}{
(\log \langle s\rangle)^{p-1}}\: ds
\end{equation}
for all $t\in (0,T_\varepsilon).$
\end{proposition}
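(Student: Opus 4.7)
The plan is to combine the lower bound \eqref{final-equal} from Proposition~\ref{prop:identity} with a H\"older-type upper bound for the kernel quotient, using the two-sided estimates for $\eta_q$ in Lemma~\ref{lem:aux}, and then exploit the defining algebraic identity of the Strauss critical exponent to arrive at exactly the integrand in \eqref{frame}.

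First I would discard the two non-negative contributions coming from $f$ and $r_2(0)f+g$ on the right-hand side of \eqref{final-equal}, so that only the nonlinear term survives:
\[
\widetilde{F}(t)\;\geq\; C_1\int_0^t (t-s)\int_{\mathbb{R}^n}|u(x,s)|^p\,\eta_q(x,t,s)\,dx\,ds.
\]
Next, since $\widetilde F(s)=\int u(x,s)\eta_q(x,s,s)\,dx$ is supported in $\{|x|\leq s+R\}$, H\"older's inequality with weight $\eta_q(x,t,s)$ gives
\[
|\widetilde{F}(s)|^p\;\leq\;\Bigl(\int_{|x|\leq s+R}|u(x,s)|^p\eta_q(x,t,s)\,dx\Bigr)\Bigl(\int_{|x|\leq s+R}\frac{\eta_q(x,s,s)^{p/(p-1)}}{\eta_q(x,t,s)^{1/(p-1)}}\,dx\Bigr)^{p-1},
\]
so bounding the inner nonlinear integral from below reduces to bounding the weight quotient from above.

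Then I would insert the estimates from Lemma~\ref{lem:aux}: the lower bound $\eta_q(x,t,s)\geq B_1\langle t\rangle^{-1}\langle s\rangle^{-q}$ on the cone, and the upper bound $\eta_q(x,s,s)\leq B_2\langle s\rangle^{-(n-1)/2}\langle s-|x|\rangle^{(n-3)/2-q}$. A direct calculation with $q=(n-1)/2-1/p$ shows
\[
\Bigl(\tfrac{n-3}{2}-q\Bigr)\tfrac{p}{p-1}=-1,\qquad -\tfrac{(n-1)p}{2(p-1)}+\tfrac{q}{p-1}=-\tfrac{n-1}{2}-\tfrac{1}{p(p-1)},
\]
so the weight quotient is controlled by
\[
C\,\langle t\rangle^{1/(p-1)}\,\langle s\rangle^{-(n-1)/2-1/(p(p-1))}\,\langle s-|x|\rangle^{-1}.
\]
Radial integration of $\langle s-|x|\rangle^{-1}$ over the ball $|x|\leq s+R$ produces $C\langle s\rangle^{n-1}\log\langle s\rangle$ (this is where the logarithm appears, and it is the only truly delicate step). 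Raising to the power $p-1$ yields
\[
\Bigl(\int_{|x|\leq s+R}\tfrac{\eta_q(x,s,s)^{p/(p-1)}}{\eta_q(x,t,s)^{1/(p-1)}}\,dx\Bigr)^{p-1}\leq C\,\langle t\rangle\,\langle s\rangle^{(n-1)(p-1)/2-1/p}\,(\log\langle s\rangle)^{p-1}.
\]

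Finally I would invoke the defining relation of the Strauss exponent: since $p=p_S(n)$ is a root of \eqref{gamma}, $(n-1)p^2=(n+1)p+2$, which is equivalent to
\[
\tfrac{(n-1)(p-1)}{2}-\tfrac{1}{p}=1.
\]
Hence the exponent of $\langle s\rangle$ in the weight bound is exactly $1$, and substituting back gives
\[
\int_{\mathbb{R}^n}|u(x,s)|^p\eta_q(x,t,s)\,dx\;\geq\;\frac{C\,\widetilde{F}(s)^p}{\langle t\rangle\,\langle s\rangle\,(\log\langle s\rangle)^{p-1}},
\]
which, plugged into the first displayed inequality, produces exactly \eqref{frame}. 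The main obstacle is the borderline integrability at $p=p_S(n)$: the exponent of $\langle s-|x|\rangle$ lands precisely at the critical value $-1$, so the radial integral is only logarithmically divergent, and one must carry the factor $(\log\langle s\rangle)^{p-1}$ all the way through instead of absorbing it into a power of $\langle s\rangle$.
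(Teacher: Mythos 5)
Your proposal is correct and follows essentially the same route as the paper, which proves Proposition \ref{prop:frame} by combining Proposition \ref{prop:identity} with Lemma \ref{lem:aux} ``by the same argument as Proposition 4.2 in \cite{WY}'': drop the nonnegative data terms, apply H\"older's inequality with weight $\eta_q(x,t,s)$ over the support $\{|x|\le s+R\}$, invoke the bounds (ii)--(iii) of Lemma \ref{lem:aux}, and use the Strauss relation $(n-1)p^2=(n+1)p+2$ to pin the $\langle s\rangle$-exponent at $1$ and the $\langle s-|x|\rangle$-exponent at $-1$, producing the logarithmic factor. Your exponent computations and the radial integration step check out, so this is a faithful reconstruction of the argument the paper cites rather than a different method.
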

Now, Theorem \ref{thm:cri} can be proved by using the iteration arguments  same as the proof in Sections 4 and 5 in \cite{WY}.
For reader's convenience, we give the details here. The initial step of iteration argument starts from the Proposition \ref{prop:identity}. Neglecting the first two positive initial terms of \eqref{final-equal} and inserting the lower bound estimates \eqref{low:u^p} for $L^p$ norm and (ii) with $q=(n-1)/2-1/p$ for $\eta_q(x,t,s)$ in Lemma \ref{lem:aux}, we obtain:
\begin{eqnarray*}
&&\int_{{\mathbb{R}}^n}u(x,t) \eta_{q}(x,t,t) dx \\
&\ge& e^{-2\|r_1-r_2\|_{L^1}-\|r_2\|_{L^1}}\int_0^t(t-s) \int_{{\mathbb{R}}^n}|u(x,s)|^p \eta_{q}(x,t,s) dxds\\
&\ge&\frac{C_0B_1e^{-2\|r_1-r_2\|_{L^1}-\|r_2\|_{L^1}}\ep^p}{\langle t\rangle}\int_0^t\frac{t-s}{\langle s\rangle ^{q+(n-1)p/2-(n-1)}}ds.
\end{eqnarray*}
Notice that $$q+(n-1)p/2-(n-1)=\frac{(n-1)p}{2}-\frac{n-1}2-\frac1p=1.$$
Denote $M=C_0B_1e^{-2\|r_1-r_2\|_{L^1}-\|r_2\|_{L^1}}$, for $t>\frac32$,
\begin{eqnarray}
\widetilde{F}(t)&\ge&\frac{M\ep^p}{t}\int_1^t\frac{t-s}{s}ds=\frac{M\ep^p}{t}\int^t_{1}\log sds\nonumber\\
&\ge&\frac{M\ep^p}{t}\int^t_{2t/3}\log sds\ge \frac{M}3\ep^p\log(2t/3)\label{slicing-0}.
\end{eqnarray}
Next, we apply the ``slicing method'' under the iteration frame Proposition \ref{prop:frame}. We have following Proposition for $\widetilde{F}(t)$.
\begin{proposition}
Suppose the assumption of Theorem \ref{thm:cri} are fulfilled, then
\begin{equation}\label{slicing-1}
\widetilde{F}(t)\ge C_j(\log \langle t\rangle)^{-b_j}(\log(t/l_j))^{a_j} \ \ \mbox{for}\ t\geq l_j
\end{equation}
with series of constants:
\begin{equation}\label{cri-constants}\ l_j=2-2^{-(j+1)},\ \ a_j=\frac{p^{j+1}-1}{p-1},\ \ b_j=p^{j}-1\end{equation}
and $C_1=N\ep^{p^2}$, where $N=\frac{CM^p}{3^p7}$. For $j\geq 2$,
\begin{equation}\label{C_j}
C_{j}=\exp\left(p^{j-1}\log\left( C_1(2p)^{-\frac{p}{p-1}}E^\frac1{p-1}\right)-\log E^{\frac1{p-1}}+\log(2p)^{\frac{p}{p-1}+j-1}\right),\end{equation}
where $E=\frac{C(p-1)}{8p^2}$.
\end{proposition}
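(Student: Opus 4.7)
The plan is to establish \eqref{slicing-1} by induction on $j\ge 1$, using Proposition \ref{prop:frame} as the iteration engine. The slicing sequence $l_j=2-2^{-(j+1)}$ shifts the base point of the logarithm upward at each step while keeping it strictly below the upper limit $2$, so that the lower endpoint of integration stays inside the range where the previous hypothesis is valid.

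For the base case $j=1$, the idea is to insert the preliminary estimate \eqref{slicing-0}, $\widetilde F(s)\ge(M/3)\varepsilon^p\log(2s/3)$ for $s>3/2$, into the right-hand side of \eqref{frame}. Restricting the $s$-integration to $[3/2,\,6t/7]$ (i.e.\ to $[l_0,tl_0/l_1]$ with $l_0:=3/2$), using the coarse bounds $\log\langle s\rangle\le\log\langle t\rangle$ and $1/\langle s\rangle\ge 1/(3s)$, and performing the change of variables $u=\log(2s/3)$ converts the integrand into $u^p$ and yields $\int_0^{\log(t/l_1)}u^p\,du=(\log(t/l_1))^{p+1}/(p+1)$. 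Gathering the prefactors gives \eqref{slicing-1} at $j=1$ with $a_1=p+1$, $b_1=p-1$ and $C_1=N\varepsilon^{p^2}$, $N=CM^p/(3^p\cdot 7)$; the $1/7$ in $N$ originates in the width factor $1-l_0/l_1=1/7$.

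For the inductive step, substitute the level-$j$ hypothesis into \eqref{frame} and restrict the integration to $s\in[l_j,\,tl_j/l_{j+1}]$, which is non-empty whenever $t\ge l_{j+1}$. On this slice one has the uniform lower bound $t-s\ge t\,2^{-(j+2)}/l_{j+1}$, together with $\log\langle s\rangle\le\log\langle t\rangle$ and $1/\langle s\rangle\ge 1/(3s)$, so that the substitution $u=\log(s/l_j)$ reduces the remaining integral to
\begin{equation*}
\int_0^{\log(t/l_{j+1})}u^{pa_j}\,du=\frac{(\log(t/l_{j+1}))^{pa_j+1}}{pa_j+1}.
\end{equation*}
Reading off powers yields the recurrences $a_{j+1}=pa_j+1$ and $b_{j+1}=pb_j+(p-1)$, whose explicit solutions are the stated $a_j=(p^{j+1}-1)/(p-1)$ and $b_j=p^j-1$; careful bookkeeping of the prefactor, together with the bound $pa_j+1\le p^{j+2}/(p-1)$, produces a multiplicative recurrence of the shape $C_{j+1}\ge E\,C_j^p\,(2p)^{-j(p-1)}$ with $E=C(p-1)/(8p^2)$.

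To reach the closed form \eqref{C_j}, take logarithms: the recurrence becomes the affine-linear scheme $\log C_{j+1}=p\log C_j+\log E-j(p-1)\log(2p)$. Decomposing its general solution as a homogeneous piece $Kp^{j-1}$ plus a particular solution of the form $\alpha+\beta(j-1)$ and matching coefficients gives $\beta=\log(2p)$ and $\alpha=\log(2p)^{p/(p-1)}-\log E^{1/(p-1)}$; the initial condition $\log C_1$ then fixes $K=\log\bigl(C_1(2p)^{-p/(p-1)}E^{1/(p-1)}\bigr)$, which reproduces the exponent appearing in \eqref{C_j}. The main technical obstacle I foresee lies in the inductive step: the slice $[l_j,tl_j/l_{j+1}]$ must be chosen so that its lower endpoint stays in the validity range of the previous hypothesis, so that its width $2^{-(j+2)}/l_{j+1}$ combines with $(pa_j+1)^{-1}$ into only the mild $(2p)^{-j(p-1)}$ correction in the $C_j$ recurrence (rather than a faster-decaying weight that would destroy the final lifespan bound), and so that the substitution $u=\log(s/l_j)$ aligns the output with the shifted base $l_{j+1}$ needed to continue the iteration.
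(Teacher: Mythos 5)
Your proposal follows the paper's proof essentially step for step: the base case comes from inserting \eqref{slicing-0} into \eqref{frame}, the induction runs along the same slicing sequence $l_j$, and the constants are obtained by solving a log-linear recurrence. The only mechanical difference is where the slicing happens. You restrict the $s$-integration to the \emph{lower} slice $[l_j,\,t\,l_j/l_{j+1}]$, freeze $t-s\ge t\,2^{-(j+2)}/l_{j+1}$ there, and compute $\int(\log(s/l_j))^{pa_j}\,ds/s$ exactly by substitution; the paper instead integrates by parts over all of $[l_j,t]$, producing the factor $(pa_j+1)^{-1}$, and then restricts to the \emph{upper} slice $[t\,l_j/l_{j+1},t]$. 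The two devices are interchangeable: both give $a_{j+1}=pa_j+1$, $b_{j+1}=pb_j+p-1$, the same width factor $2^{-(j+3)}$ and the same factor $a_{j+1}^{-1}\ge(p-1)p^{-(j+2)}$, so the difference is cosmetic (your accounting in fact picks up two extra harmless factors of $1/3$ from $\langle s\rangle\le 3s$ and $t/\langle t\rangle\ge 1/3$, so your $C_1$ is a fixed constant times smaller than the stated $N\ep^{p^2}$; the paper's own accounting of $N$ is loose in the same way, e.g.\ it drops the $1/(p+1)$ from the integration by parts).

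There is, however, one concrete inconsistency in your write-up: the bookkeeping you describe does \emph{not} produce the recurrence you state. The width factor $2^{-(j+2)}/l_{j+1}\ge 2^{-(j+3)}$ combined with $(pa_j+1)^{-1}\ge(p-1)p^{-(j+2)}$ gives $\frac{p-1}{8p^2}\,(2p)^{-j}$, i.e.\ the recurrence is $C_{j+1}\ge E\,C_j^p\,(2p)^{-j}$, with exponent $-j$ rather than your $-j(p-1)$. You then solve the $-j(p-1)$ recurrence correctly (your homogeneous-plus-particular decomposition is right and reproduces \eqref{C_j} exactly). Curiously, the paper commits the mirror-image slip: it states the correct recurrence $C_{j+1}=E\,C_j^p\,(2p)^{-j}$ but mis-evaluates $p^j\sum_{k=1}^{j}kp^{-k}$ as $\frac{p^{j+1}-p}{p-1}-j$, whereas the correct value is $\frac{1}{p-1}\bigl(\frac{p^{j+1}-p}{p-1}-j\bigr)$; as a result its closed form \eqref{C_j} is also the solution of the $-j(p-1)$ recurrence. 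So you and the paper land on the same formula \eqref{C_j}, each with one mismatched link in the chain. The mismatch is immaterial for Theorem \ref{thm:cri}: both closed forms have the shape $\log C_j=p^{j-1}\log\bigl(\mathrm{const}\cdot\ep^{p^2}\bigr)+O(j)$, which is all the final contradiction argument uses; moreover for $p\ge 2$ (i.e.\ $n\le 4$ at $p=p_S(n)$) one has $(2p)^{-j(p-1)}\le(2p)^{-j}$, so \eqref{C_j} is then a legitimate weakening of the true bound. To make your proof self-contained, either carry the exponent $-j$ through the summation and accept the correspondingly modified closed form, or note explicitly that passing from $(2p)^{-j}$ to $(2p)^{-j(p-1)}$ requires $p\ge2$.
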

\begin{proof}
For case $j=1$, inserting \eqref{slicing-0} into \eqref{frame} and replacing the domain of integral by $[l_0,t]$, we have for $t\geq l_1$,
\begin{eqnarray*}
\widetilde{F}(t)&\geq&\frac{C}{\langle t\rangle}
\int_{l_0}^t \frac{t-s}{\langle s\rangle}\frac{(M/3\ep^p\log(2t/3))^p}{
(\log \langle s\rangle)^{p-1}}\: ds\\
&\geq&\frac{CM^{p}\ep^{p^2}}{3^pt}\log\langle t\rangle^{1-p}\int_{l_0}^t\frac{t-s}{s}\log^p(s/l_{0})ds\\
&\geq&\frac{CM^{p}\ep^{p^2}}{3^pt}\log\langle t\rangle^{1-p}\int_{l_0}^t\log(s/l_{0})^{p+1}ds
\end{eqnarray*}
the last line is due to integration by parts. Since, $t\geq l_1$, then $\frac{l_0}{l_1}t\geq l_0$. Hence,
$$\int_{l_0}^t\log(s/l_{0})^{p+1}ds\geq \int_{\frac{l_0}{l_1}t}^t\log(s/l_{0})^{p+1}ds\geq \frac17t\log(t/l_1)^{p+1}.$$
Consequently, $$\widetilde{F}(t)\geq\frac{CM^{p}\ep^{p^2}}{3^p7}(\log \langle t\rangle)^{1-p}(\log(t/l_1))^{p+1}.$$
For the case of $j\geq 2$, we prove \eqref{slicing-1} by induction. Assume \eqref{slicing-1} holds for $j$ with $t\geq l_j$, inserting this into \eqref{frame} to obtain:
\begin{eqnarray*}
\widetilde{F}(t)&\geq&\frac{CC_j^p}{\langle t\rangle}
\int_{l_j}^t \frac{t-s}{\langle s\rangle}\frac{(\log \langle t\rangle)^{-b_jp}(\log(t/l_j))^{a_jp}}{
(\log \langle s\rangle)^{p-1}}\: ds\\
&\geq&\frac{CC_j^p}{t}\log\langle t\rangle^{-b_jp+1-p}\int_{l_j}^t\frac{t-s}{s}(\log(s/l_j))^{a_jp}ds\\
&\geq&\frac{CC_j^p}{t(a_jp+1)}\log\langle t\rangle^{-b_jp+1-p}\int_{l_j}^t\log(s/l_{j})^{a_jp+1}ds.
\end{eqnarray*}
Let $t\geq l_{j+1}$, replacing the domain of integration by $[l_j/l_{j+1}t,t]$ and noticing that \eqref{cri-constants} for $a_j,\ b_j$ and $1-l_j/l_{j+1}$, we get
\begin{eqnarray*}
\widetilde{F}(t)&\geq&\frac{CC_j^p}{t(a_jp+1)}\log\langle t\rangle^{-b_jp+1-p}\int_{l_j/l_{j+1}t}^t\log(s/l_{j})^{a_jp+1}ds\\
&\geq&\frac{C(1-l_j/l_{j+1})C_j^p}{a_jp+1}\log\langle t\rangle^{-b_jp+1-p}\log(t/l_{j+1})^{a_jp+1}\\
&\geq&\frac{CC_j^p}{2^{j+3}a_{j+1}}\log\langle t\rangle^{-b_{j+1}}\log(t/l_{j+1})^{a_{j+1}}\\
&\geq&\frac{C(p-1)C_j^p}{2^{j+3}p^{j+2}}\log\langle t\rangle^{-b_{j+1}}\log(t/l_{j+1})^{a_{j+1}}.
\end{eqnarray*}
Let
$$C_{j+1}=\frac{C(p-1)}{8p^2}\frac{C_{j}^p}{(2p)^{j}},$$ by directly induction of this relation, we have
\begin{eqnarray*}
&&\log C_{j+1}=p\log C_j+\log\frac{C(p-1)}{8p^2}-j\log(2p)\\
=&&p\left(p\log C_{j-1}+\log\frac{C(p-1)}{8p^2}-(j-1)\log(2p)\right)+\log\frac{C(p-1)}{8p^2}-j\log(2p)\\
=&&p^2\log C_{j-1}+(p+1)\log\frac{C(p-1)}{8p^2}-\left(p(j-1)+j\right)\log(2p)\\
=&&\cdots\\
=&&p^{j}\log {C_1}+\sum_{l=0}^{j-1}p^l\log\frac{C(p-1)}{8p^2}-\sum_{l=0}^{j-1}p^l(j-l)\log(2p)\\
=&&p^{j}\log {C_1}+\frac{1-p^j}{1-p}\log\frac{C(p-1)}{8p^2}-p^j\sum_{k=1}^{j}kp^{-k}\log(2p)\\
=&&p^{j}\log {C_1}+\frac{1-p^j}{1-p}\log\frac{C(p-1)}{8p^2}-(\frac{p^{j+1}-p}{p-1}-j)\log(2p)\\
%&=&p^j\left(\log C_1+\frac1{p-1}\log\frac{C(p-1)}{8p^2}-\frac{p}{p-1}\log(2p)\right)-\frac1{p-1}\log\frac{C(p-1)}{8p^2}+(\frac{p}{p-1}+j)\log(2p)\\
=&&p^j\log\left( C_1(2p)^{-\frac{p}{p-1}}(\frac{C(p-1)}{8p^2})^\frac1{p-1}\right)-\frac1{p-1}\log\frac{C(p-1)}{8p^2}\\
&&+(\frac{p}{p-1}+j)\log(2p)
\end{eqnarray*}
which is just \eqref{C_j}.
\end{proof}
From \eqref{slicing-1}, \eqref{a_jb_j} and \eqref{C_j}, we find for $t>2$
\begin{eqnarray*}&&\widetilde{F}(t)\ge C_j(\log \langle t\rangle)^{-b_j}(\log(t/l_j))^{a_j}\\
&&>E^{-\frac1{p-1}}\exp\left(p^{j-1}\log\left( C_1(2p)^{-\frac{p}{p-1}}E^\frac1{p-1}\right)\right)(\log \langle t\rangle)^{1-p^j}(\log(t/l_j))^{\frac{p^{j+1}-1}{p-1}}\\
&&=E^{-\frac1{p-1}}\left( C_1(2p)^{-\frac{p}{p-1}}E^\frac1{p-1}\right)^{p^{j-1}}(\log \langle t\rangle)^{1-p^j}(\log(t/l_j))^{\frac{p^{j+1}-1}{p-1}}\\
&&>E^{-\frac1{p-1}}\left( C_1(2p)^{-\frac{p}{p-1}}E^\frac1{p-1}(\log \langle t\rangle)^{-p}  (\log(t/2))^{\frac{p^{2}}{p-1}}  \right)^{p^{j-1}}(\log \langle t\rangle)(\log(t/2))^{\frac{-1}{p-1}}.
\end{eqnarray*}
We concern on the $C_1(2p)^{-\frac{p}{p-1}}E^\frac1{p-1}(\log \langle t\rangle)^{-p}  (\log(t/2))^{\frac{p^{2}}{p-1}}$. For $t>4$, we have
$$\log \langle t\rangle<2\log(t),\ \log(t/2)>\frac12\log(t),$$
so that
\begin{eqnarray*}
&&C_1(2p)^{-\frac{p}{p-1}}E^{\frac1{p-1}}(\log \langle t\rangle)^{-p}  (\log(t/2))^{\frac{p^{2}}{p-1}}\\
&\geq&C_1(2p)^{-\frac{p}{p-1}}E^{\frac1{p-1}}2^{-p-\frac{p^2}{p-1}} \log(t)^{\frac{p}{p-1}}\\
&\geq&N2^{-\frac{2p^2}{p-1}}p^{-\frac{p}{p-1}}E^{\frac1{p-1}} \ep^{p^2}\log(t)^{\frac{p}{p-1}}\\
&:=&B\ep^{p^2}\log(t)^{\frac{p}{p-1}},
\end{eqnarray*}
where we denote $B=N2^{-\frac{2p^2}{p-1}}p^{-\frac{p}{p-1}}E^{\frac1{p-1}}$.
Take small $\ep_0=\ep_0(u_0,p,R,n,r_1,r_2)$ so that
$$\exp\{B^{-(p-1)/p}\ep_0^{-p(p-1)}\}\geq4,$$ then for $\ep<\ep_0$, supposing that $T$ satisfies
$$T>\exp\{B^{-(p-1)/p}\ep^{-p(p-1)}\}(\geq4),$$
we have  $$C_1(2p)^{-\frac{p}{p-1}}E^\frac1{p-1}(\log \langle t\rangle)^{-p}  (\log(t/2))^{\frac{p^{2}}{p-1}}>1,$$
which gives contradiction by taking $j\rightarrow\infty$.

\section{Appendix}

In this appendix, we outline the proof of local existence and finite propagation speeds, which is mainly based on the proof of \cite{Lai-Sch-Taka}.
Also, we give the proof of Lemma \ref{lem:L^1} and Lemma \ref{lem:rho}, both of which relate with the solution behavior of second order ODE with variable coefficient.

\textit{Proof of local existence and finite propagation speeds:}

Basically, the construction of the local solutions is same as the
argument in \cite{Lai-Sch-Taka}, but with minor modification due to the perturbation of Laplacian.
For readers' convenience, we give the details of proof.

From the assumption \eqref{g}, we know that there exists a positive constant $C_1$ such that
\begin{equation}
\label{g-est}
|g_{ij}(x)|\le 1+ C_1e^{-\alpha|x|}
\end{equation}
holds for $i,j=1,2\cdots,n$.
Let
\begin{equation}
S(r)=r+\frac{1}{\alpha}\log(1+C_1e^{-\alpha r}),\ r\geq0
\end{equation}
and denote $S(x)=S(|x|)$,
$R=S(R_0)=R_0+\frac{1}{\alpha}\log(1+C_1e^{-\alpha R_0})$.
A direct calculation gives $S'(r)=\frac{1}{1+C_1e^{-\alpha r}}<1$ so that $S(x)$ satisfies the triangle inequality, as
\begin{equation}S(|x|)-S(|y|)=S'(\theta)\big||x|-|y|\big|<|x-y|<S(|x-y|).\label{trian}\end{equation}
%%,\ \ S''(r)=\frac{C_1\alpha e^{-\alpha r}}{(1+C_1 e^{-\alpha r})^2}>0;$$
We may also find by combining \eqref{g-est} that
$$\sum_{i,j=1}^ng_{ij}(x)|\partial_{x_i}S(x)| |\partial_{x_j} S(x)|\le \frac{1}{1+C_1e^{-\alpha|x|}}$$
which simply implies that $S(x)$ satisfies following differential inequality
\begin{eqnarray}
\label{ineq-S}
&&\sum_{i,j=1}^{n}g_{ij}(x) \partial_{x_i}S(x)\partial_{x_j} S(x)\le1.
\end{eqnarray}
Moreover, as $S(x)$ is increasing function with respect to $|x|$, so that
\begin{equation}
\label{support-R}
\{x\in\mathbb{R}^n:\ |x|\le R_0 \}\subset \{x \in \mathbb{R}^n:\ S(x) \leq R\}.
\end{equation}
We now take our function space as following Banach space $X(T)$:
\begin{eqnarray*}
X(T)& = & \{\phi\in C([0,T),H^1(\mathbb{R}^n))\cap C^1([0,T),L^2(\mathbb{R}^n)):\\
 &&\quad \hbox{supp}\: \phi  \subset B_S(t+R),\ \|\phi\|_{X(T)}\le K \},\\
\|\phi\|_{X(T)} & = & \sup_{t\in[0,T)}E^{1/2}(t), \ \ E(t)=\int_{\mathbb{R}^n}
((\partial_t\phi)^2+\sum_{i,j=1}^{n}g_{ij}(x)\partial_{x_i}\phi \partial_{x_j}\phi)dx,
\end{eqnarray*}
where $B_S(t+R)=\{x: S(x)\leq t+R\}$ and $K,\ T$ are positive constants.

Consider the following initial value problem for
$v\in X(T)$:
\begin{equation}
\left\{
\begin{array}{llll}
u_{tt}-\Delta_g u=H_v(x,t)\  \mbox{in}\ \mathbb{R}^n\times(0,\infty),\label{u=Mv}\\
u(x,0)=\varepsilon f(x),\
u_t(x,0)=\varepsilon g(x)\  \mbox{for}\ x\in\mathbb{R}^n,\nonumber
\end{array}
\right.
\end{equation}
where $H_v(x,t)=|v(x,t)|^p-a(t)v_t(x,t)-b(t)v(x,t)$.
We aim to show the map:
$$M: v\rightarrow u = Mv,\ v\in X(T)$$
is a contraction. In the following, we assume that $p \leq n/(n-2)$ when $n\geq 3$ and the positive constant $C$ may
vary from line to line.
For $v\in X(T)$, combining the H\"{o}lder's inequality and Gagliardo-Nirenberg inequality, we know that
$$\|v\|_{L^{2p}(\mathbb{R}^n)}\leq C\|v\|^{1-\theta(2p)}_{L^2(\mathbb{R}^n)}\|\nabla v\|^{\theta(2p)}_{L^2(\mathbb{R}^n)},\ \ \theta(2p):=\frac{n(p-1)}{2p}. $$
Moreover, as $|x|\leq S(x)<t+R$ in the support of $v$, the Poincar\'{e} inequality further gives
$$\|v\|_{L^2(\mathbb{R}^n)}\leq C(t+R)\|\nabla v\|_{L^2(\mathbb{R}^n)}.$$
Hence,
\begin{equation}\label{l2p}
\|v\|_{L^{2p}(\mathbb{R}^n)}\leq C(t+R)^{1-\theta(2p)}\|\nabla v\|_{L^2(\mathbb{R}^n)}\leq C(t+R)^{1-\theta(2p)}E_v^{1/2}.
\end{equation}
Consequently, we have $H_v(x,t)\in\ L^2(\mathbb{R}^n\times [0,T))$ for some fixed $T$.
By applying density argument in \cite{Struwe}(also see \cite{Lai-Sch-Taka}), it suffices to consider the smooth solution $u$ and compactly supported functions $f,g$.

We first show the finite propagation speed:
\begin{equation}
\label{finiteprop}
\supp u \subset \{(x,t)\in \mathbb{R}^n\times[0,T)\colon S(x)\leq t+R\}.
\end{equation}
Fix a point $(x_0, t_0)\in \mathbb{R}^n\times (0, T)$ such that $S(x_0)\geq t_0+R$ and set the backward cone with vertex at $(x_0,t_0)$
\begin{equation*}
	C := \{ (x,t) \in \mathbb{R}^n\times[0,T)\colon  S(x-x_0)\leq t_0-t \}.
\end{equation*}
Define the energy on the time-section of the cone as
\begin{equation}\label{local_ennorm}
e(t,u(t)):=\frac{1}{2}\int_{C_t}(u_t^2+\sum_{i,j=1}^{n}g_{ij}(x)\partial_{x_i}u \partial_{x_j}u)dx.
\end{equation}
Here
\begin{equation*}
	C_{t} := \{ x \in \mathbb{R}^n \colon S(x-x_0)\le t_0-t \},
\end{equation*}
which is outside the support of $v$, since
$$S(x)\geq S(x_0)-S(x-x_0)\geq (t_0+R)+(t-t_0)=t+R.$$
Proceeding as the proof of Theorem 8 in page 395 of \cite{Ev1}, we have:
\begin{eqnarray}
\frac{d}{dt}e(t,u(t))&=&\int_{C_t}u_tu_{tt}+\sum_{i,j=1}^n g_{ij}(x)u_{x_i}u_{x_jt}dx\nonumber\\&&
-\frac12\int_{\partial C_t}(u_t^2+\sum_{i,j=1}^n g_{ij}u_{x_i}u_{x_j})\frac{1}{|\nabla S(x-x_0)|}dS\label{A-B}:=A-B
\end{eqnarray}
Taking integration by parts for $A$, we have
\begin{equation*}
A=\int_{C_t}u_t(H_v-\sum_{i,j=1}^n g_{ijx_j}u_{x_i})dx+\int_{\partial C_t}\sum_{i,j=1}^n g_{ij}u_{x_i}\nu^ju_tdS
\end{equation*}
where $\nu^j$ is the $j$-th component of outer unit normal direction to $\partial C_t$. Note that on the $\partial C_t$, $S(x-x_0)=t-t_0$ so that $\nu=\frac{\nabla S(x-x_0)}{|\nabla S(x-x_0)|}$. By Cauchy-Schwarz inequality and \eqref{ineq-S}, we have
$$|\sum_{i,j=1}^n g_{ij}u_{x_i}\nu^j|\leq (\sum_{i,j=1}^n g_{ij}u_{x_i}u_{x_j})^\frac12(\sum_{i,j=1}^n g_{ij}\nu^i\nu^j)^\frac12\leq (\sum_{i,j=1}^n g_{ij}u_{x_i}u_{x_j})^\frac12\frac1{|\nabla S|}$$
which in turns that
$$A\leq \int_{C_t}u_t(H_v-\sum_{i,j=1}^n g_{ijx_j}u_{x_i})dx+\int_{\partial C_t}(\sum_{i,j=1}^n g_{ij}u_{x_i}u_{x_j})^\frac12u_t\frac1{|\nabla S|}dS.$$
Plugging this into \eqref{A-B} and utilizing \eqref{g}, we obtain:
\begin{eqnarray*}
\frac{d}{dt}e(t,u(t))&\leq&\int_{C_t}u_t(H_v-\sum_{i,j=1}^n g_{ijx_j}u_{x_i})dx\\
&\leq& C e(t,u(t))+ e^{1/2}(t,u(t))\|H_v(\cdot,t)\|_{L^2(C_t)},
\end{eqnarray*}
or equivalently,
\begin{equation*}
\frac{d}{dt}e^{1/2}(t,u(t)) \le Ce^{1/2}(t,u(t)) +
 \|H_{v}(\cdot,t)\|_{L^2(C_t)}.
\end{equation*}
As $C_t$ is outside the support of $v$, we have $H_v\equiv 0$. Using Gronwall's inequality, we finally obtain $e(t,u(t))=0$
for $0\le t \le t_0$ and hence $u\equiv 0$ in $C_t$.
Therefore, we get (\ref{finiteprop}).

Next, we show that $\|Mv\|_{X_t}\leq K$. We obtain the energy identity by multiplying $u_t$ at both sides of \eqref{u=Mv} such as (25) in \cite{Lai-Sch-Taka}:
\begin{eqnarray*}
&&\frac{\partial}{\partial t}\frac{1}{2}
\left((\partial_tu)^2+\sum_{i,j=1}^{n}g_{ij}(x)\partial_{x_i}u \partial_{x_j}u\right)\\
&&=\sum_{i,j=1}^{n}\partial_{x_i}(\partial_tug_{ij}(x)\partial_{x_j}u)+|v|^pu_t-a(t)vu_t-b(t)v_tu_t.
\end{eqnarray*}
Integrating the above identity over $\mathbb{R}^n\times[0,T]$ gives:
$$E(t,u(t))-E(0,u(0))=\int_0^tds\int_{\mathbb{R}^n}|v|^pu_s-a(s)vu_s-b(s)v_su_sdx.$$
As the uniform elliptic condition \eqref{uniform-g}, we have $E_{\phi}(t)\le E(t)$ for $E_{\phi}(t)$ defined in \cite{Lai-Sch-Taka}. Thus the terms in righthandside can be similar estimated. By exploiting \eqref{l2p}, we have
\begin{eqnarray*}
\int_{\mathbb{R}^n}|v|^pu_sdx&\leq& (\int_{\mathbb{R}^n}|v|^{2p}dx)^{1/2}E^{1/2}(s,u(s))\\
&\leq& C (s+R)^{p(1-\theta(2p))}E^{1/2}(s,v(s))E^{1/2}(s,u(s))
\end{eqnarray*}
and
$$\int_{\mathbb{R}^n}|v||u_t|dx\leq C(t+R)E^{1/2}(s,v(s))E^{1/2}(s,u(s)),$$
$$\int_{\mathbb{R}^n}|v_t||u_t|dx\leq 2E^{1/2}(s,v(s))E^{1/2}(s,u(s)).$$
Hence,
$$E(t,u(t))-E(0,u(0))\leq C \int_0^t a_K(s)E^{1/2}(s,u(s))ds,$$
where $$a_K(s)= K^p(s+R)^{p(1-\theta(2p))}+K(s+R)b(s)+Ka(s).$$
Applying the Bihari's inequality yields
\begin{eqnarray*}E^{1/2}(t,u(t))&\leq& E^{1/2}(0,u(0))+C\int_0^ta_K(s)ds\\
&\leq& E^{1/2}(0,u(0))+C\max\{K,K^p\}T(1+T)^\gamma
\end{eqnarray*}
for some positive $\gamma$. Picking a large $K$ and small enough $T$, so that $\|u\|_{X_T}\leq K$.

Finally, we show $M$ is a contraction map. Given $u_1=Mv_1$, $u_2=Mv_2$ and let $\overline{u}=u_1-u_2$ and $\overline{v}=v_1-v_2$, we observe that $\overline{u}$ satisfies:
\begin{equation}
\left\{
\begin{array}{llll}
\overline{u}_{tt}-\Delta_g \overline{u}=|v_1(x,t)|^p-|v_2(x,t)|^p-a(t)\overline{v}_t(x,t)-b(t)\overline{v}(x,t)\  \mbox{in}\ \mathbb{R}^n\times(0,\infty),\nonumber\\
\overline{u}(x,0)=0,\
\overline{u}_t(x,0)=0\  \mbox{for}\ x\in\mathbb{R}^n.\nonumber
\end{array}
\right.
\end{equation}
Multiplying $\overline{u}_t$ on both sides and proceeding similar as above, we finally reach:
$$\|\overline{u}\|_{X_T}\leq C\max\{1, K^{p-1}\}T(1+T)^\gamma\|\overline{v}\|_{X_T}.$$
Taking $T$ small enough so that $C\max\{1, K^{p-1}\}T(1+T)^\gamma\leq1$, we conclude with the desired result.
$ \hfill{} \Box$\\

\textit{Proof of Lemma \ref{lem:L^1}:}
We start the proof from the Ricatti's equation \eqref{r_2}.
Setting $r_2(t)=-\frac{k'(t)}{k(t)}$, then $k(t)$ satisfies:
$$k''+a(t)k'+b(t)k=0.$$
Moreover, let
$$c(t)=e^{\int_{t_0}^ta(\tau)d\tau},\ \ x =\int_{t_0}^t\frac 1{c(s)}ds,\ \ y(x)=k(t),$$
for some large $t_0$,
we find that:
\begin{equation}\label{ricatti}
\frac{d^2y}{dx^2}+c^2(t)b(t)y(x)=0.
\end{equation}
Since $e^{-\|a\|_{L^1}}<c(t)<e^{\|a\|_{L^1}}$ implies $$(t-t_0)e^{-\|a\|_{L^1}}<x<(t-t_0)e^{\|a\|_{L^1}}.$$
Integrating by changing of variable, for $\tilde{x}>0$ and $\tilde{t}>t_0$ ($\tilde{x} =\int_{t_0}^{\tilde{t}}\frac 1{c(s)}ds$),
\begin{eqnarray*}
\int_{\tilde{x}}^\infty c^2(t)b(t)dx &=& \int_{\tilde{t}}^\infty c^2(t)b(t) \frac1{c(t)}dt = \int_{\tilde{t}}^\infty c(t)b(t) dt\\
&<& e^{\|a\|_{L^1}}\int_{\tilde{t}}^\infty b(t)dt.
%\leq\frac{ e^{\|a\|_{L^1}}\|tb(t)\|_{L^1}}{\tilde{t}-t_0}=O(\f1{4\tilde{x}}).
\end{eqnarray*}
Changing the integral variable and applying the mean value theorem for integrals, we find
$$\tilde{t}\int_{\tilde{t}}^\infty b(t)dt=\tilde{t}\int_0^{\frac1{\tilde{t}}}\frac1{s^2}b(\frac1s)ds=\frac1{\hat{s}^2}b(\frac1{\hat{s}}).$$
Here, $\hat{s}\in (0,\frac1{\tilde{t}})$ and we note that $tb(t)\in L^1$ implies $t^2b(t)\rightarrow 0$ as $t\rightarrow\infty$.
In this sense, we obtain
$$\int_{\tilde{x}}^\infty c^2(t)b(t)dx < e^{\|a\|_{L^1}}\int_{\tilde{t}}^\infty b(t)dt = o(\frac1{\tilde{t}})= o(\frac1{\tilde{x}}).$$
On the other hand, notice that equation $$y''+\frac1{4x^2}y=0$$
has the non-oscillating solution
$$y(x)=c_1\sqrt{x}+c_2\sqrt{x}\log(x)$$
and
$$\int_{\tilde{x}}^\infty \frac{1}{4x^2}dx=\frac1{4\tilde{x}}.$$
Hence, by Hille-Wintner comparison theorem (Theorem 2.12 in \cite{Swanson}), \eqref{ricatti} also has non-oscillation solution.
Furthermore, since $c(t)$ is finite and $(t-t_0)b(t)$ is in $L^1$, we have
$$\int_{0}^\infty(x-0)|c^2(t)b(t)|dx<\int_{t_0}^\infty(t-t_0)c(t)b(t)dt<\infty.$$
By Corollary 9.1 of Chapter XI in \cite{Hartman}, this implies that there exists a pair of solutions $y_1$ and $y_2$ of \eqref{ricatti}
satisfying, as $x\rightarrow \infty$,
\[
\begin{array}{lll}&y_1(x)\sim 1,\ \ \ \ \ &y_1'(x)\sim o(\frac1x),\\
&y_2(x)\sim x,\ \ \ \ \ &y_2'(x)\sim 1.
\end{array}
\]
Here and below, $A\sim B$ means $A\leq C_1B$ and $B\leq C_2A$ for some constants $C_1$ and $C_2$.
Picking $y=y_1$ then we have $k(t)\sim 1$ for $t$ large and
$$\int_{t_0}^\infty r_2(\tau)d\tau=\lim_{t\rightarrow\infty}\int_{t_0}^t-\frac{k'(\tau)}{k(\tau)}d\tau=\lim_{t\rightarrow\infty}\ln(k(t))\left|^t_{t_0}\right.<\infty. $$
Hence, we obtain the $L^1$ integrability of $r_1(t)$ and $r_2(t)$.
Moreover, we can check the sign of $r_2(t)$ if $b(t)>0$. According to \eqref{ricatti}, $$y''=-c^2(t)b(t)y,$$
for sufficiently large $t>t_0$ or equivalently sufficiently large $x>0$, if $y(x)\sim 1$ is positive (\textit{resp.} negative), then $y''<0\ (\textit{resp}. >0)$ which means $y'$ is decreasing (\textit{resp}. increasing) with respect to $x$.
Since $y'\sim o(\frac1x)$ which goes to $0$, $y'$ must be positive (\textit{resp}. negative). From
$$r_2(t)=-\frac{k'(t)}{k(t)}=-\frac1{c(t)}\frac{y'(x)}{y(x)},$$ we know $r_2(t)$ is negative. $ \hfill{} \Box$\\

\begin{remark}\label{rem-App}
In fact, in order to apply the Lemma 2.3 in \cite{WY-damped}, we only need to check the sign of $r_1(t)-r_2(t)$. Setting
 $2v(t):=r_1(t)-r_2(t)=a(t)-2r_2(t)$, one may find $v(t)$ satisfies:
$$v'(t)+v^2(t)=\frac12a'(t)+\frac14a^2(t)-b(t).$$
With some similar argument as above for $r_2(t)$, one can also obtain the $L^1$ integrability of $v$ and hence $r_1$ and $r_2$.
Moreover, if $\frac12a'(t)+\frac14a^2(t)-b(t)<0$ for large $t$, one obtain $v'<0$ which means that $v$ decreases, with the $L^1$ integrability, one may finally reduce the positivity of $v$.
In this sense, we can relax the non-negativity of $b(t)$ to $b(t)>\frac12a'(t)+\frac14a^2(t)$, while in the latter case, $b(t)$ could be negative.
However, the authors believe the essential improvement shall come from the improvement of Lemma 2.3 in \cite{WY-damped}. So far, we do not know how to get ride of the positive requirement of $a(t)$ there.
\end{remark}
\textit{Proof of Lemma \ref{lem:rho}:}
In order to show this claim, one may set $\eta(t)=\rho(t)e^{\lambda t}$ which satisfies
$$\eta''-(2\lambda+a)\eta'+(\lambda a+b-a')\eta=0.$$
Set
$$c(t)=e^{-\int_{t_0}^t2\lambda+a(\tau)d\tau},\ \ x =\int_{t_0}^t\frac 1{c(s)}ds,\ \ y(x)=\eta(t),$$
for some large $t_0$, one find that $y(x)$ satisfies:
\begin{equation}\label{y}
\frac{d^2y}{dx^2}+c^2(t)(\lambda a+ b-a')y(x)=0.
\end{equation}
Since
$$\int_{t_0}^t\frac{c(t)}{c(s)}ds=\int_{t_0}^te^{-\int_{s}^t2\lambda+a(\tau)d\tau}ds\leq e^{\|a\|_{L^1}}\int_{t_0}^te^{-2\lambda(t-s)}ds<\frac12e^{\|a\|_{L^1}},$$
so one obtain
$$\int_{0}^\infty x|c^2(t)(\lambda a+b-a')|dx=\int_{t_0}^\infty|x c(t)(\lambda a+b-a') |dt<\frac12e^{\|a\|_{L^1}}\int_{t_0}^\infty|\lambda a+b-a'|dt$$
which is finite. Hence, with similar argument as \eqref{ricatti}, \eqref{y} has a solution $y(x)\sim 1$ for large $x$, which implies $\eta(t)\sim 1$ and further $\rho(t)\sim e^{-\lambda t}$ for large $t$.

\section*{Acknowledgments} The authors would like to thank the referee for the careful reading and useful suggestions.
The first and third authors were partially supported by Grant-in-Aid for Scientific Research (No.18H01132), JSPS.
The second author was partially supported by Zhejiang Provincial Nature Science Foundation of China under Grant No. LY18A010023.

\end{document}